\newcommand{\R}{\mathbb{R}}
\newcommand{\ds}{\displaystyle}
\newtheorem{theorem}{Theorem}
\newtheorem{lem}{Lemma}
\newtheorem{cor}{Corollary}
\newtheorem{rem}{Remark}
\title{Nonsmooth method for constrained optimization.}
\author{Kazufumi Ito
\thanks{Center for Research in Scientific Computation \& Department of Mathematics,
North Carolina State University,
Raleigh,
NC 27695, USA.
({\tt kito@math.ncsu.edu}).
The author is partially supported by US-ARO grant 49308-MA, and
US-AFSOR grant FA9550-06-1-0241.}
\and
Tomoya Takeuchi
\thanks{Institute of Industrial Science, The University of Tokyo
4-6-1-Cw601 Komaba, Meguro-ku, Tokyo 153-8505, Japan
({\tt takeuchi@sat.t.u-tokyo.ac.jp}).
}
}
\date{}
\begin{document}
\maketitle
\begin{abstract}
\noindent
We propose an implicit iterative algorithm for an exact penalty method
arising from inequality constrained optimization problems.
A rapidly convergent fixed point method is developed for a regularized penalty functional.
The applicability and feasibility of the proposed method is demonstrated using large scale
inequality constrained problems.
\end{abstract}
{\bf \footnotesize{Key words}.} \footnotesize{inequality constrained optimization, exact penalization, fast iterative method.} \\
{\bf \footnotesize{AMS subject classifications}.} \footnotesize{49M05, 65K15}
\section{Introduction}
Let us consider the constrained optimization
\begin{equation}\label{Const}
\min_{x\in \R^n} \quad F(x),
\end{equation}
subject to the unilateral
constraint
\begin{equation} \label{Ui}
\ds (Gx-g)_i \le 0.
\end{equation}
The bilateral constraint $\ell_i\le (Gx-g)_i\le u_i$ can be transformed into the unilateral constraint, and
we only consider \eqref{Ui} without loss of generality.
We assume $F$ is a smooth functional on $\R^n$ and
$G\in \R^{m \times n}$ is onto.

Inequality constrained optimization problems appear in a vast range
of applications such as contact problems
\cite{Kikuchi+Oden-Contprobelas:88}, obstacle problems
\cite{Glowinski+LionsETAL-Numeanalvariineq:81}, topology optimization
\cite{Allaire-Shapoptihomometh:02,Hassani+Hinton-Homostrutopoopti:99,
Sigmund-Morpblacwhitfilt:07}, robotics and gait analysis
\cite{Bhalerao+CreanETAL-Hybrcompformrobo:11}, contact
mechanics\cite{Stewart-Finicontmech:01}, and there are several
numerical methods which can be used as practical tools for solving
the problems;
\cite{Hager+Zhang-actialgoconsopti:06,Hild+Laborde-Quadfinielemmeth:02,
Hintermueller+KovtunenkoETAL-Obstprobwithcohe:11,
Hoppe+Kornhuber-Adapmultmethobst:94,
Ito+Kunisch-Opticontobstprob:07,
Noor+Tirmiz-Finidifftechsolv:88,Toivanen+Oosterlee-projalgemultmeth:12,
Zhang-Neurnetwopti:00}.
For references to the literature on the numerical methods for
optimization problems, one may also refer to the monographs
\cite{Bonnans+GilbertETAL-Numeopti:06,Glowinski-Numemethnonlvari:08,
Glowinski+LionsETAL-Numeanalvariineq:81,Ito+Kunisch-Lagrmultapprvari:08}.

Interior or exterior penalty methods require solving a sequence of
unconstrained problems in which the penalty parameter (the
controlling parameter) approaches 0 or infinity. This yields the
ill-conditioning of the unconstrained problem, which is the main
drawback of the penalty method. In contrast, exact penalty methods
transform the constrained problem \eqref{Const} - \eqref{Ui} into a single
unconstrained problem. Surprisingly, the penalized unconstrained
problems are exact under certain sufficient conditions for a local
optimality in the problem \eqref{Const} - \eqref{Ui}, i.e., all solutions of the
penalized unconstrained problem are also solutions of the original
problem for all values of the penalty parameter grater than some
positive value. For this reason, considerable attention has been
devoted to the use of exact penalty approaches in solving
constrained optimization problems. A survey of the chronological
development of the penalty methods (including multiplier methods)
since 1968 to 1993 can be found in
\cite{Boukari+Fiacco-Survpenaexacmult:95}.

The exact penalty methods require, however, minimization of a
nondifferentiable cost functional.
One may not employ a standard optimization solver that are
customized for optimization problems with smooth functions.
Therefore numerical techniques should be developed by utilizing the
particular structures of the penalty functions that compensate for
the absence of differentiability. Numerical methods to approximate
the solution of exact penalty methods have been considered by
several authors. We only mention the articles of
\cite{Bertsekas-AugmLagrdiffexac:82,Bertsekas-ConsoptiLagrmult:82,
Byrd+NocedalETAL-Steeexacpenameth:08,
Conn+Pietrzykowski-penafuncmethconv:77,Pillo-Exacpenameth:94,
Glowinski-Numemethnonlvari:08,Han-globconvmethnonl:77,
Scholz-Numesoluobstprob:84,Zhang-Neurnetwopti:00}.
In this article we consider the exact penalty formulation where max
is used for the penalty;
\begin{equation} \label{penalty}
\begin{array}{l}
\displaystyle\min \quad J(x)=F(x) +\beta\,\psi(Gx-g)\\[10pt]
\displaystyle \mbox{with }\; \psi(y) = \sum_i\max(0, y_i),\quad y\in \R^m,
\end{array}
\end{equation}
for $\beta>0$, and we develop fast iterative methods for
finding the minimizer based on the nonsmooth optimization theory.

The optimality condition of \eqref{penalty} is given by
\begin{equation}\label{Nec0}
-F^\prime(x)\in \beta G^t\,\partial\psi (Gx-g),
\end{equation}
where $\partial \psi$ is the convex sub-differential of $\psi$,
i.e.,
\begin{equation*}\begin{array}{l}
\partial\psi(y) = \{ s\in R^m: s_i \in  \partial
 \max(0,y_i) , \; 1\le i \le m\}, \quad
\\ \\
\partial  \max(0,s)
=\left\{
   \begin{array}{cc}
 \left[0,1 \right], &  s=0, \\
     1 , &  s>0.
\end{array}\right.
\end{array}
\end{equation*}
On the other hand, the necessary optimality of \eqref{Const}-\eqref{Ui} is given by
\begin{equation} \label{Lmul}
F^\prime(x)+G^t\mu=0,\quad
\mu=\max(0,\mu+(Gx-g)) ,
\end{equation}
where $\mu \in \R^m$ is the Lagrange multiplier of the unilateral constraint \cite{Ito+Kunisch-Lagrmultapprvari:08}.
Let the pair $(\bar{x},\bar{\mu})$ be a solution to \eqref{Lmul}. Then $\bar{x}$ is also a solution to \eqref{Nec0} provided that $\beta\ge \max_i|\bar{\mu}_i|$, (e.g.,\cite{Bonnans-Numeopti:06}).

Due to the singularity and the non-uniquness of the subgradient of $\partial\psi $, the direct treatment of the condition \eqref{Nec0} many not be feasible for numerical computation. The common strategy to alleviate the technical difficulty resulting from the non-differentiability of the penalty functional is to introduce a regularized penalty functional: Let us consider the regularized problem to
\eqref{penalty};
\begin{equation} \label{Reg}
\begin{array}{l}
\min\quad J_\epsilon(x)=F(x)+\beta \psi_{\epsilon}(Gx-g) \\[10pt]
\displaystyle \mbox{with }\; \psi_\epsilon(y) = \sum_i\phi_\epsilon(y_i),\quad y\in \R^m,
\end{array}
\end{equation}
where $\phi_\epsilon(s)$ for $s\in \R$ is a regularization of the function $s\rightarrow \max(0,s)$ defined by
\begin{equation}\label{phi}
\phi_{ \epsilon}(s)=\left\{\begin{array}{ll}
\ds \frac{\epsilon}{2} &  s \le 0 \\ \\
\ds \frac{ s ^2}{2\epsilon}+\frac{\epsilon}{2}
& s  \in [0,\epsilon] \\ \\
s &   s \ge \epsilon.
\end{array} \right.
\end{equation}
An arbitrary $\epsilon>0$ in \eqref{phi} is used to avoid the
singularity and to determine a single value in the subdifferential
$\partial\max(0,s)$.
Since $\phi_\epsilon \in
C^1$, the necessary optimality condition of \eqref{Reg} is given by the equation
\begin{align}\label{nec}
&F^\prime(x)+ \beta G^t\psi_\epsilon^\prime(Gx-g)=0.
\end{align}
Although the non-uniqueness for concerning subdifferential in the
optimality condition \eqref{Nec0} is now bypassed through
regularization, the optimality condition \eqref{nec} is still
nonlinear. One of the strategies for solving \eqref{nec} is to use the
asymptotic solution at infinity of the nonlinear ODE
\begin{equation*}
\frac{d x(t)}{dt} =-CF^\prime(x(t)) - \beta\,
G^t\psi^\prime_\epsilon (Gx(t)-g),
\end{equation*}
with some positive definite matrix $C$ which serves as a
precondition of $F^\prime(x(t))$. (See \cite{Zhang-Neurnetwopti:00}). The
method is simple and easy to implement, however, the convergence
speed is quite slow and the numerical solution obtained by the
algorithm is not accurate. This is due to the fact that the
nonlinearity in $G^{t}\psi^\prime_\epsilon(Gx(t)-g)$ is not fully
taken into account and not incorporated in algorithms.
One of the objective of this paper is to design the fast, accurate numerical algorithm for \eqref{nec} by taking the nonlinearity into consideration.

The outline of the paper is as follows. In Section 2 an implicit iterative algorithm for \eqref{nec} is proposed. The property and convergence of the proposed algorithm are analyzed. In
Section 3 the Primal-Dual Active method is introduced and the
relation to the proposed method is discussed. In Section 4 several numerical tests are reported to assess the performance of the method.
\section{Algorithm and Convergence Analysis}
In this section we introduce the algorithm for \eqref{nec} and analyze its convergence.  First, we
have the consistency result as $\epsilon\to 0^+$.
\begin{theorem}\label{thm:converge}
 Let $x_\epsilon$ be a solution of the regularized problem \eqref{Reg}. For an arbitrary $\beta > 0$, any cluster
point of $\{x_\epsilon\}_{\epsilon>0}$ is also a solution of \eqref{penalty}.
\end{theorem}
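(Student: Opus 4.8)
The plan is to exploit the fact that $\psi_\epsilon$ converges to $\psi$ \emph{uniformly} on $\R^m$ as $\epsilon\to 0^+$; this reduces the statement to an elementary stability argument for minimizers and avoids any appeal to $\Gamma$-convergence or coercivity estimates.

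First I would record a two-sided bound on the scalar regularization $\phi_\epsilon$ of \eqref{phi}. A direct case check gives
\[
\max(0,s)\le \phi_\epsilon(s)\le \max(0,s)+\frac{\epsilon}{2}\qquad\text{for all }s\in\R,
\]
where on $s\in[0,\epsilon]$ the left inequality is the completion of square $\phi_\epsilon(s)-s=(s-\epsilon)^2/(2\epsilon)\ge 0$, and the right inequality is $s^2/(2\epsilon)\le s$ since $s\le\epsilon$ there; the cases $s\le 0$ and $s\ge\epsilon$ are immediate. Summing over the $m$ coordinates of $y=Gx-g$ and multiplying by $\beta$ yields the uniform estimate
\[
0\le J_\epsilon(x)-J(x)\le \frac{m\beta\epsilon}{2}\qquad\text{for all }x\in\R^n.
\]

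Next, let $\bar x$ be a cluster point of $\{x_\epsilon\}_{\epsilon>0}$, say $x_{\epsilon_k}\to\bar x$ with $\epsilon_k\to 0^+$. Fix an arbitrary $z\in\R^n$. Since $x_{\epsilon_k}$ minimizes $J_{\epsilon_k}$, the bound above gives
\[
J(x_{\epsilon_k})\le J_{\epsilon_k}(x_{\epsilon_k})\le J_{\epsilon_k}(z)\le J(z)+\frac{m\beta\epsilon_k}{2}.
\]
Because $F$ is smooth and $y\mapsto\psi(y)=\sum_i\max(0,y_i)$ is continuous, $J$ is continuous, so letting $k\to\infty$ (the left side tending to $J(\bar x)$) gives $J(\bar x)\le J(z)$. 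As $z\in\R^n$ was arbitrary, $\bar x$ is a global minimizer of $J$, i.e.\ a solution of \eqref{penalty}.

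The argument is essentially routine; the one point requiring care is establishing that the regularization error is $O(\epsilon)$ \emph{uniformly} in $s$ rather than merely pointwise, since it is precisely this uniformity that allows the estimate to survive when both the comparison point $z$ and the evaluation point $x_{\epsilon_k}$ are handled simultaneously. I would also note that if the $x_\epsilon$ are only assumed to be local (rather than global) minimizers, the same chain of inequalities still delivers the conclusion after restricting $z$ to a fixed ball about $\bar x$, using that $x_{\epsilon_k}$ eventually lies in any prescribed neighborhood of $\bar x$.
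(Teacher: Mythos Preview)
Your proof is correct and follows essentially the same approach as the paper: both rely on the uniform estimate $0\le \phi_\epsilon(s)-\max(0,s)\le \epsilon/2$, insert it into the minimality inequality $J_\epsilon(x_\epsilon)\le J_\epsilon(\cdot)$, and pass to the limit using continuity of $J$. Your version is in fact a bit more careful than the paper's (you keep the factor $m$ in the aggregated bound and spell out the sandwich $J(x_{\epsilon_k})\le J_{\epsilon_k}(x_{\epsilon_k})\le J_{\epsilon_k}(z)\le J(z)+m\beta\epsilon_k/2$ explicitly), and the closing remark about local minimizers is a nice addendum not present in the paper.
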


\begin{proof}
Let $x^*$ be a solution to \eqref{penalty}. Then we have
\begin{equation}\label{ineq}
F(x_\epsilon)+\beta\,\psi_\epsilon(x_\epsilon) \le
F(x^*)+\beta\,\psi_\epsilon(x^*).
%\\ \\
%F(x^*)+\beta\,\psi(x^*) \le
%F(x_\epsilon)+\beta\,\psi(x_\epsilon).
\end{equation}
Let $\bar{x}$ be a cluster point of  $\{x_\epsilon\}_{\epsilon>0}$. From \eqref{ineq}, we have
\begin{align*}
&F(\bar{x})+\beta\,\psi(\bar{x})
+F(x_\epsilon)-F(\bar{x})  + \beta(\psi_\epsilon(x_\epsilon) - \psi(x_\epsilon)
+ \psi(x_\epsilon) - \psi(\bar{x}) ) \\
&\le F(x^*)+\beta\,\psi(x^*)  +\beta(\psi_\epsilon(x^*) - \psi(x^*) ).
\end{align*}
Thus, from continuity of $F$, $\psi$ and the fact that $\ds 0\le \psi_\epsilon(x)-\psi(x)\le \frac{\epsilon}{2}$ for all $x$, we obtain
\[
F(\bar{x})+\beta\,\psi(\bar{x})\le F(x^*)+\beta\,\psi(x^*).
\]
\end{proof}
As a consequence of Theorem \ref{thm:converge}, we have that
\begin{cor} Suppose that each of the problem \eqref{penalty} and \eqref{Reg} admits the unique solution. If $\beta>0$ is sufficiently large,
$x_\epsilon$ converges to the solution $\bar{x}$ of \eqref{penalty}.
\end{cor}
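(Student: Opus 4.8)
The plan is to read the Corollary as a straightforward consequence of Theorem \ref{thm:converge} together with the uniqueness hypothesis; the only genuine work is to show that the family $\{x_\epsilon\}_{\epsilon>0}$ actually accumulates, i.e.\ stays in a compact set as $\epsilon\to0^+$, so that Theorem \ref{thm:converge} has a cluster point to act on.

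First I would record the a priori bound already contained in the proof of Theorem \ref{thm:converge}. Let $\bar x$ denote the unique solution of \eqref{penalty}. Optimality of $x_\epsilon$ for \eqref{Reg} gives $J_\epsilon(x_\epsilon)\le J_\epsilon(\bar x)$, and combining this with $\psi\le\psi_\epsilon\le\psi+\tfrac{\epsilon}{2}$ yields
\[
J(x_\epsilon)=F(x_\epsilon)+\beta\,\psi(Gx_\epsilon-g)\le J_\epsilon(x_\epsilon)\le J_\epsilon(\bar x)\le J(\bar x)+\tfrac{\beta}{2}\epsilon .
\]
Hence, for every $\epsilon\le 1$, the minimizer $x_\epsilon$ lies in the sublevel set $\{x:J(x)\le J(\bar x)+\tfrac{\beta}{2}\}$ of the penalized objective $J$.

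Next I would argue that this sublevel set is bounded, and this is precisely where the hypothesis ``$\beta>0$ sufficiently large'' enters. Since $\psi(Gx-g)=\sum_i\max(0,(Gx-g)_i)$ grows (at least linearly) along every direction lying outside the recession cone of the feasible set $\{x:Gx\le g\}$, taking $\beta$ large enough renders $J=F+\beta\,\psi(G\cdot-g)$ coercive on the region where $F$ does not by itself provide coercivity; consequently all sublevel sets of $J$ are bounded. Therefore $\{x_\epsilon\}_{\epsilon\le1}$ is bounded and, in $\R^n$, precompact, so it has at least one cluster point. The remaining step is routine: by Theorem \ref{thm:converge} every cluster point of $\{x_\epsilon\}$ is a solution of \eqref{penalty}, and by the uniqueness assumption the only such point is $\bar x$; since a precompact net with a unique cluster point converges to it, the standard subsequence argument (every subsequence admits a further subsequence converging to $\bar x$) gives $x_\epsilon\to\bar x$ as $\epsilon\to0^+$.

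The main obstacle I anticipate is the coercivity/boundedness step: one has to make precise what ``$\beta$ sufficiently large'' means and identify the geometric condition on $F$ relative to the feasible cone under which $J$ has bounded sublevel sets (or, alternatively, simply assume such coercivity, which the paper's running hypotheses may implicitly supply). Once the boundedness of $\{x_\epsilon\}$ is established, Theorem \ref{thm:converge} and the uniqueness of the solution of \eqref{penalty} deliver the conclusion with essentially no further effort.
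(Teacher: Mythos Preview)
Your argument is correct and is the natural way to turn Theorem~\ref{thm:converge} into a convergence statement: bound the family $\{x_\epsilon\}$, extract cluster points, invoke Theorem~\ref{thm:converge}, and use uniqueness to upgrade subsequential convergence to full convergence. The paper itself offers no proof of the corollary at all---it is simply asserted as an immediate consequence of Theorem~\ref{thm:converge}---so you have supplied details the authors omit rather than diverged from them.

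Your reading of ``$\beta$ sufficiently large'' as the hypothesis that makes $J$ coercive (hence $\{x_\epsilon\}$ bounded) is reasonable, and you are right to flag that this step needs an assumption on $F$: if $F$ decays faster than linearly in some infeasible direction, or fails to be coercive along the recession cone of the feasible set, no finite $\beta$ will yield bounded sublevel sets. The paper does not spell out such a condition, so your caveat is well placed; in the quadratic examples that dominate the paper ($F(x)=\tfrac12(x,Ax)-(b,x)$ with $A>0$) coercivity of $F$ itself already gives boundedness for every $\beta>0$, and the qualifier on $\beta$ is more plausibly meant for the exact-penalty property mentioned just before \eqref{Nec0}. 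Either way, your proof sketch stands once any coercivity hypothesis is in force.
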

\subsection{Successive iteration algorithm}
We propose the fast algorithm that provides an
accurate numerical solution of \eqref{nec}. For this objective, we
first
observe that the necessary optimality condition
\eqref{nec} is written as
\begin{equation}\label{opt}
F^\prime(x) + \beta\, G^t (\chi_\epsilon(x)Gx - f_\epsilon(x)) = 0,
\end{equation}
where $\chi_\epsilon(x)$ denotes a diagonal matrix with the entries
\begin{equation}% \label{nec}
\begin{array}{l}
[\chi_\epsilon(x)]_{j,j}=\left\{\begin{array}{ll}
\ds
\frac{1}{\max(\epsilon,(Gx-g)_j)}~, & (Gx-g)_j\ge0\\
0~, &  (Gx-g)_j   < 0 \end{array}\right.
\end{array}
\end{equation}
and $f_\epsilon(x)$ is a column vector depending on $x$ defined by
\begin{equation}% \label{nec}
\begin{array}{l}
 [f_\epsilon(x)]_j=\left\{\begin{array}{ll}
\ds
\frac{ g_j}{\max(\epsilon,(Gx -g)_j)}~, & (Gx-g)_j\ge0
\\ 0~, &  (Gx-g)_j < 0 \end{array}\right.
\end{array}
\end{equation}
The optimality condition in the form \eqref{opt} suggests the following fixed
point iteration;
\begin{equation} \label{Fix}
\alpha\,P(x^{k+1}-x^k)+F^\prime(x^k)+\beta\,
G^t( \chi_\epsilon(x^k)Gx^{k+1} - f_\epsilon(x^k) )
 =0,
\end{equation}
where $P$ is positive, symmetric and serves a pre-conditioner for
$F''(x_k)$. The parameter $\alpha>0$ serves a stabilizing and acceleration stepsize
(see, Theorem 3).

Let $d^k=x^{k+1}-x^k$. Equation \eqref{Fix} for $x^{k+1}$ is
equivalent to the equation for $d^k$
\begin{align*}
\alpha\,P d^k + F^\prime(x^k)+
\beta\,
G^t( \chi_\epsilon(x^k)G(x^k + d^{k+1}) - f_\epsilon(x^k) )=0,
\end{align*}
which gives us
\begin{equation}\label{d}
(\alpha\,P + \beta\,G^t \chi_\epsilon(x^k) G )d^k = -J^\prime_\epsilon(x^k).
\end{equation}

\noindent{\bf Lemma 1} The direction $d^k$ is a descent direction
for $J_\epsilon(x)$ at $x^k$.
\begin{proof} From \eqref{d}
\begin{equation*}
(d_k,J^\prime_\epsilon(x^k))=  -((\alpha\,P + \beta\, G^t \chi_\epsilon(x^k)  G)
d_k,d^k)= -\alpha(Pd^k,d^k) -\beta(\chi_\epsilon(x^k)  G d^k,Gd^k)<0,
\end{equation*}
where we used the fact that $P$ is strictly positive definite.
\end{proof}

So the iteration \eqref{Fix} can be seen as a descent method and is
written as; \vspace{2mm}

\noindent{\bf Algorithm 1: Fixed point iteration  \eqref{Fix}.}\\[10pt]
Step 0. Set parameters: $\beta,\alpha,\epsilon, P$.\\[10pt]
Step 1. Compute the direction by $ \; (P + \beta G^t \chi_\epsilon(x^k)   G )d^k =
- J^\prime(x^k)$.\\ [10pt]
Step 2. Update $  x^{k+1}= x^k  + d^k. $\\[10pt]
If $ |J^\prime(x^k)|_{\infty}<TOL$, then stop. Otherwise repeat Step 1 - Step 2.
\\[10pt]
Let us make some remarks on the Algorithm:
\begin{rem} In many applications, the
structure of $F^\prime(x)$, $P$ and $G$ are sparse block diagonals,
and the resulting system \eqref{d} for the direction $d^k$ then
becomes a linear system with a sparse symmetric positive-definite
matrix, and can be efficiently solved by, for example the Cholesky decomposition
method.
\end{rem}
\begin{rem}
If $F(x)=\frac{1}{2}(x,Ax)-(b,x)$, then we have
$F^\prime(x)=Ax-b$. For this case we may use the alternative update
\begin{equation} \label{Sp}
\alpha\,P(x^{k+1}-x^k)+Ax^{k+1}-b+\beta\,G^t(\chi_\epsilon(x^k)Gx^{k+1} -f_\epsilon(x^k))=0,
\end{equation}
assuming that it doesn't cost much to perform
$$
(\alpha\,P+A+\beta\,G^t\chi_\epsilon(x^k)G)d^k=-J_\epsilon^\prime(x^k).
$$
\end{rem}

Algorithm 1 is globally and rapidly convergent and the
following results justify the fact.  Let us introduce some notations; ${\cal A}^k=\{j: (Gx^k-g) _j \ge 0\}$ and ${\cal
I}^k=\{i:\vert ( Gx^k-g )_i < 0\}$. \vspace{2mm}

\begin{lem}\label{lem:R} Let
$
R(x,\hat{x}):=-(F(\hat{x})-F(x)-F^\prime(x)(\hat{x}-x))
+\alpha\,(P(x-\hat{x}),x-\hat{x}),
$ and $\chi_k = \chi_\epsilon(x^k)$. The following identity holds for all $k$;
\[
R(x^{k+1},x^k)+F(x^{k+1})-F(x^k)+\frac{\beta}{2} \,(\chi_k
G d^k,G d^k) +\frac{1}{2} \sum_{j\in {\cal A}^k}([\chi_k]_{j,j},|(Gx^{k+1}-g)_{j}|^2-
|(Gx^k-g)_{j}|^2) =0.
\]
\end{lem}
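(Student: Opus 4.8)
The plan is to derive the identity directly from the defining relation \eqref{d} for the direction $d^k$, combined with the definition of $R$. First I would take the inner product of \eqref{d} with $d^k$, which yields
\[
\alpha\,(Pd^k,d^k)+\beta\,(\chi_k Gd^k,Gd^k)=-(J_\epsilon^\prime(x^k),d^k)=-(F^\prime(x^k),d^k)-\beta\,(\psi_\epsilon^\prime(Gx^k-g),Gd^k).
\]
The term $-(F^\prime(x^k),d^k)$ is, by definition of $R$, equal to $R(x^{k+1},x^k)+F(x^{k+1})-F(x^k)-\alpha(Pd^k,d^k)$, so the $\alpha(Pd^k,d^k)$ contributions cancel and we are left with an identity of the form
\[
R(x^{k+1},x^k)+F(x^{k+1})-F(x^k)+\beta\,(\chi_k Gd^k,Gd^k)+\beta\,(\psi_\epsilon^\prime(Gx^k-g),Gd^k)=0.
\]

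The heart of the argument is therefore a purely scalar computation: for each index $j\in{\cal A}^k$, rewrite the cross term $\beta\,[\psi_\epsilon^\prime(Gx^k-g)]_j\,(Gd^k)_j$ so that, together with the appropriate fraction of $\beta\,[\chi_k]_{j,j}(Gd^k)_j^2$, it produces the telescoping quantity $\tfrac12[\chi_k]_{j,j}\big(|(Gx^{k+1}-g)_j|^2-|(Gx^k-g)_j|^2\big)$. Writing $a=(Gx^k-g)_j$ and $a+\delta=(Gx^{k+1}-g)_j$ with $\delta=(Gd^k)_j$, and noting that on ${\cal A}^k$ we have $\phi_\epsilon^\prime(a)=a/\max(\epsilon,a)=[\chi_k]_{j,j}\,a$ and $[f_\epsilon(x^k)]_j=[\chi_k]_{j,j}g_j$, one checks the elementary algebraic identity
\[
[\chi_k]_{j,j}a\,\delta+\tfrac12[\chi_k]_{j,j}\delta^2=\tfrac12[\chi_k]_{j,j}\big((a+\delta)^2-a^2\big),
\]
which is just $\tfrac12[\chi_k]_{j,j}\delta^2+[\chi_k]_{j,j}a\delta=\tfrac12[\chi_k]_{j,j}(2a\delta+\delta^2)$. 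Summing over $j\in{\cal A}^k$ and observing that for $j\in{\cal I}^k$ the corresponding entries of $\chi_k$ and of $\psi_\epsilon^\prime(Gx^k-g)$ vanish (so those indices contribute nothing), the cross term $\beta(\psi_\epsilon^\prime(Gx^k-g),Gd^k)$ plus half of $\beta(\chi_k Gd^k,Gd^k)$ collapses exactly to $\tfrac12\sum_{j\in{\cal A}^k}[\chi_k]_{j,j}\big(|(Gx^{k+1}-g)_j|^2-|(Gx^k-g)_j|^2\big)$, leaving the remaining half $\tfrac{\beta}{2}(\chi_k Gd^k,Gd^k)$ intact. This gives precisely the claimed identity.

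The only mildly delicate point is bookkeeping: one must be careful that $\psi_\epsilon^\prime(Gx^k-g)$ restricted to ${\cal A}^k$ really equals $\chi_k(Gx^k-g)$ componentwise — equivalently that $\phi_\epsilon^\prime(s)=s/\max(\epsilon,s)$ for $s\ge 0$, which is immediate from \eqref{phi} — and that the definition \eqref{opt} of $\chi_\epsilon$ and $f_\epsilon$ is used consistently with $F'(x)+\beta G^t\psi_\epsilon'(Gx-g)=F'(x)+\beta G^t(\chi_\epsilon(x)Gx-f_\epsilon(x))$. No step is genuinely hard; the main obstacle is simply organizing the index splitting ${\cal A}^k\cup{\cal I}^k$ and the scalar completion-of-the-square cleanly so that the $\alpha(Pd^k,d^k)$ terms cancel against $R$ and exactly one factor of $\tfrac12$ survives in front of $\beta(\chi_k Gd^k,Gd^k)$.
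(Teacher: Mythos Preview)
Your proposal is correct and follows essentially the same route as the paper: both arguments take the inner product of the update relation with $d^k$, absorb $\alpha(Pd^k,d^k)$ and the Taylor remainder into $R$, and then rewrite the penalty contribution. The only cosmetic differences are that the paper multiplies \eqref{Fix} (so the penalty term appears directly as $E_k=\beta(\chi_k(Gx^{k+1}-g),Gd^k)$) and then invokes the polarization identity $2(a,a-b)=\|a-b\|^2+\|a\|^2-\|b\|^2$ with $a=\sqrt{\chi_k}(Gx^{k+1}-g)$, $b=\sqrt{\chi_k}(Gx^k-g)$, whereas you start from the equivalent form \eqref{d} and do the identical calculation componentwise as a completion of the square; these are the same computation.
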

\begin{proof} Multiplying \eqref{Fix} by $d^k=x^{k+1}-x^k$
\[
\alpha\,(Pd^k,d^k)-(F(x^{k+1})-F(x^k)-F^\prime(x^k)d^k)+F(x^{k+1})-F(x^k)+E_k=0,
\]
where
\begin{equation*}
E_k=\beta(\chi_k(G x^{k+1}-g),G d^k).
\end{equation*}
From the identity
$ 2(a,a-b) = \Vert a-b\Vert^2 + \Vert a\Vert^2 -\Vert b \Vert ^2 $ with
$a =\sqrt{\chi_k}(Gx^{k+1}-g)$ and $b = \sqrt{\chi_k}(Gx^k -g)$,
we obtain
\begin{align*}
E_k &=(a,a-b) = \frac{\beta}{2}\,(\chi_k G ^td^k,G d^k)  + \frac{1}{2}\sum_{j\in {\cal A}^k}([\chi_k]_{j,j},|(Gx^{k+1}-g)_{j}|^2-
|(Gx^k-g)_{j}|^2).
\end{align*}
\end{proof}

\begin{theorem} Assume there exists $\omega>0$ such that
$$
R(x,\hat{x}) \ge \omega\,\Vert x-\hat{x}\Vert^2,\qquad x,\; \hat{x} \in \R^n.
$$ If there exists $k_0$ such that  ${\cal
I}^{k+1} \subset {\cal
I}^k $ for all $k\ge k_0$, then
$$
\omega\,\Vert x^{k+1}-x^k\Vert^2+J_\epsilon(x^{k+1})-J_\epsilon(x^k)\le 0
$$
and $\{x^k\}$ is globally convergent.
\end{theorem}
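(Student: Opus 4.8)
The plan is to read the energy decrease directly off the identity of Lemma~\ref{lem:R}, dominating the nonsmooth penalty increment by a quadratic majorant of the scalar regularizer $\phi_\epsilon$, and to bring in the nesting of the inactive sets only at the indices where the diagonal weight $\chi_k$ degenerates to zero. Write $d^k=x^{k+1}-x^k$, $y^k=Gx^k-g$. First I would split
\[
J_\epsilon(x^{k+1})-J_\epsilon(x^k)=(F(x^{k+1})-F(x^k))+\beta(\psi_\epsilon(y^{k+1})-\psi_\epsilon(y^k)),
\]
and use Lemma~\ref{lem:R} in the form $F(x^{k+1})-F(x^k)=-R(x^{k+1},x^k)-E_k$, where $E_k$ (the quantity from the proof of Lemma~\ref{lem:R}) equals $\frac{\beta}{2}(\chi_k Gd^k,Gd^k)+\frac{\beta}{2}\sum_{j\in{\cal A}^k}[\chi_k]_{j,j}(|y^{k+1}_j|^2-|y^k_j|^2)$. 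Since $R(x^{k+1},x^k)\ge\omega\|d^k\|^2$ by hypothesis, everything comes down to the single inequality $\beta(\psi_\epsilon(y^{k+1})-\psi_\epsilon(y^k))\le E_k$.

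The engine is a one-dimensional majorization of $\phi_\epsilon$: for every $s\ge0$ and every $t\in\R$,
\[
\phi_\epsilon(t)\ \le\ \frac{t^{2}}{2\max(\epsilon,s)}+\phi_\epsilon(s)-\frac{s^{2}}{2\max(\epsilon,s)},
\]
with equality at $t=s$. This is verified by a short case check over the three branches $t\le0$, $t\in[0,\epsilon]$, $t\ge\epsilon$ of \eqref{phi}, separating $s\le\epsilon$ (so $\max(\epsilon,s)=\epsilon$) from $s\ge\epsilon$ (so $\max(\epsilon,s)=s$); in each case the defect is manifestly nonnegative. Applying it with $s=y^k_j\ge0$, $t=y^{k+1}_j$ for $j\in{\cal A}^k$ and summing bounds the active part of $\psi_\epsilon(y^{k+1})-\psi_\epsilon(y^k)$ by $\frac12\sum_{j\in{\cal A}^k}[\chi_k]_{j,j}(|y^{k+1}_j|^2-|y^k_j|^2)$, which after multiplying by $\beta$ is $\le E_k$ because $(\chi_k Gd^k,Gd^k)\ge0$. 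For an inactive index $j\in{\cal I}^k$ one has $[\chi_k]_{j,j}=0$, so $E_k$ contributes nothing there; but $y^k_j<0$ forces $\phi_\epsilon(y^k_j)=\epsilon/2$, and as $\phi_\epsilon\ge\epsilon/2$ everywhere the increment $\phi_\epsilon(y^{k+1}_j)-\epsilon/2$ is $\ge0$ and vanishes exactly when $y^{k+1}_j\le0$. Here the hypothesis on the inactive sets enters: being a nested chain inside the finite set $\{1,\dots,m\}$, $\{{\cal I}^k\}_{k\ge k_0}$ is eventually constant, say ${\cal I}^k\equiv{\cal I}$ for $k\ge k_1\ge k_0$, so ${\cal I}^k={\cal I}^{k+1}$ and each index inactive at step $k$ is still inactive at step $k+1$, contributing $0$ to both sides. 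This gives $\beta(\psi_\epsilon(y^{k+1})-\psi_\epsilon(y^k))\le E_k$, hence $\omega\|x^{k+1}-x^k\|^2+J_\epsilon(x^{k+1})-J_\epsilon(x^k)\le0$, for all $k\ge k_1$. I expect this step --- pinning down the correct quadratic majorant and seeing that the vanishing of $\chi_k$ on ${\cal I}^k$ is precisely what the active-set nesting compensates --- to be the main obstacle.

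For global convergence I would then exploit the monotone decrease: $\{J_\epsilon(x^k)\}$ is nonincreasing, and whenever it is bounded below (e.g.\ $F$ bounded below, since $\psi_\epsilon\ge0$) it converges; summing $\omega\|d^k\|^2\le J_\epsilon(x^k)-J_\epsilon(x^{k+1})$ then gives $\sum_k\|d^k\|^2<\infty$, so $d^k\to0$. Coercivity of $J_\epsilon$, which the bound $R\ge\omega\|\cdot\|^2$ enforces (the quadratic $\alpha P$ term outweighing the Bregman remainder of $F$), bounds $\{x^k\}$. Rewriting \eqref{d} as $F^{\prime}(x^k)+\beta G^{t}\psi_\epsilon^{\prime}(y^k)=-(\alpha P+\beta G^{t}\chi_k G)d^k$, the right-hand side tends to $0$ because $\|\chi_k\|\le1/\epsilon$ and $d^k\to0$, while along any convergent subsequence $x^k\to\bar x$ the left-hand side tends to $J_\epsilon^{\prime}(\bar x)$ by continuity of $\psi_\epsilon^{\prime}$; hence every cluster point solves \eqref{nec}. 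If $J_\epsilon$ is strictly convex (e.g.\ $F$ strictly convex, consistent with the $R$-hypothesis) this solution is unique, so the bounded sequence $\{x^k\}$ has a single cluster point and converges. Quantitatively, $\|J_\epsilon^{\prime}(x^k)\|\le(\alpha\|P\|+\beta\|G\|^{2}/\epsilon)\|d^k\|$ combined with a Polyak--{\L}ojasiewicz inequality for $J_\epsilon$ even yields geometric decay of $J_\epsilon(x^k)-\inf J_\epsilon$ and hence $\sum_k\|d^k\|<\infty$, i.e.\ $\{x^k\}$ is Cauchy.
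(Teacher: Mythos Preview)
Your argument is correct and follows essentially the same route as the paper. The only substantive difference is packaging: the paper obtains your quadratic majorant in one line by observing that $s\mapsto\phi_\epsilon(\sqrt{s})$ is concave on $[0,\infty)$, so its tangent at $t=(y^k_j)^2$ dominates, which is exactly your inequality $\phi_\epsilon(t)\le \phi_\epsilon(s)+\tfrac{t^2-s^2}{2\max(\epsilon,s)}$ restricted to $t\ge0$; your case-by-case verification proves the same thing (and extends it to $t\in\R$, which is harmless). On the inactive-set step you are actually more careful than the paper: the hypothesis ${\cal I}^{k+1}\subset{\cal I}^k$ as stated only yields the needed equality $\phi_\epsilon(y^{k+1}_j)=\phi_\epsilon(y^k_j)$ for $j\in{\cal I}^k$ after the finite nested chain stabilizes, and you make this explicit, whereas the paper's last displayed inequality tacitly uses $j\in{\cal I}^k\Rightarrow y^{k+1}_j\le0$. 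Your convergence discussion goes well beyond what the paper proves (the paper stops at $\sum_k\|x^{k+1}-x^k\|^2<\infty$), so the extra hypotheses you invoke there---boundedness below, coercivity, strict convexity, a PL inequality---are additions you are making, not omissions relative to the paper.
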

\begin{proof}
Since $s\to \phi_\epsilon(\sqrt{s})$ on $s \ge
0$ is concave (see \eqref{phi} for the definition of $\phi(s)$), we have
\begin{equation*}
\phi_\epsilon(\sqrt{t}) - \phi_\epsilon(\sqrt{t}) \le    (\phi_\epsilon(\sqrt{s}))^\prime (s-t)
= \frac{\sqrt{t}}{\max(\epsilon,\sqrt{t})}\frac{1}{2\sqrt{t}}(s-t), \qquad \forall s,t\ge 0,
\end{equation*}
thus
$$
\frac{1}{2}([\chi_k]_{j,j},|(Gx^{k+1}-g)_{j}|^2-|(Gx^{k}-g)_{j}|^2) \ge
\phi_\epsilon((Gx^{k+1}-g)_{j})-\phi_\epsilon((Gx^k-g)_{j}),\quad \forall j\in {\cal A}^k.
$$
Hence
\begin{align*}
&\frac{1}{2}\sum_{j\in  {\cal A}^k} ([\chi_k]_{j,j},|(Gx^{k+1}-g)_{j}|^2-|(Gx^{k}-g)_{j}|^2)
\ge \sum_{j\in  {\cal A}^k}\left[  \phi_{\epsilon}((Gx^{k+1}-g)_{j})-\phi_{\epsilon}((Gx^{k}-g)_{j})\right]\\
 &= \psi_\epsilon(Gx^{k+1}-g)-\psi_\epsilon(Gx^k-g) -\sum_{i\in{\cal
I}^k  }\left[\phi_{\epsilon}((Gx^{k+1}-g)_{i})-\phi_{\epsilon}((Gx^{k}-g)_{i}) \right]\\
&\ge \psi_\epsilon(Gx^{k+1}-g)-\psi_\epsilon(Gx^k-g).
\end{align*}
Thus, we obtain
$$
J_\epsilon(x^{k+1}) +R(x^{k+1},x^k)
+\frac{\beta}{2}\,(\chi_k G (x^{k+1}-x^k),G (x^{k+1}-x^k))
\le J_\epsilon(x^{k}).
$$
If we assume $R(x,\hat{x}) \ge \omega\,\Vert x-\hat{x}\Vert^2$ for some
$\omega>0$, then $J_\epsilon(x^k)$ is monotonically decreasing  and
$$
\sum_{k\ge k_0} \Vert x^{k+1}-x^k\Vert^2<\infty.
$$
\end{proof}
\begin{cor} Suppose $i \in{\cal I}^k$ but $ (Gx^{k+1}-g)_i > 0$. Assume
$$
\frac{\beta}{2}\,(\chi_k G (x^{k+1}-x^k),G (x^{k+1}-x^k))
-\beta\sum_{i \in {\cal I}^k}\phi_\epsilon((Gx^{k+1}-g)_i) \ge
-\omega'\,\Vert x^{k+1}-x^k\Vert^2
$$
with $0 \le\omega'<\omega$, then the algorithm is globally
convergent.
\end{cor}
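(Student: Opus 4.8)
The plan is to rerun the estimate from the proof of the preceding theorem, but to keep the contribution of the inactive set rather than discarding it. Fix a step $k$ at which the escape occurs. Starting from the identity of Lemma~\ref{lem:R} and using the concavity of $s\mapsto\phi_\epsilon(\sqrt{s})$ on $s\ge 0$ exactly as before, one gets
\[
\frac{\beta}{2}\sum_{j\in{\cal A}^k}\bigl([\chi_k]_{j,j},|(Gx^{k+1}-g)_j|^2-|(Gx^k-g)_j|^2\bigr)\ \ge\ \beta\sum_{j\in{\cal A}^k}\bigl[\phi_\epsilon((Gx^{k+1}-g)_j)-\phi_\epsilon((Gx^k-g)_j)\bigr].
\]
Splitting $\psi_\epsilon$ over the partition ${\cal A}^k\cup{\cal I}^k$, and using $\phi_\epsilon((Gx^k-g)_i)=\epsilon/2$ for $i\in{\cal I}^k$ together with $\phi_\epsilon\ge 0$, the right-hand side is bounded below by $\beta\,[\psi_\epsilon(Gx^{k+1}-g)-\psi_\epsilon(Gx^k-g)]-\beta\sum_{i\in{\cal I}^k}\phi_\epsilon((Gx^{k+1}-g)_i)$. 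This is the only place where the argument differs from that of the preceding theorem: there the inactive sum vanishes because no index leaves ${\cal I}^k$, whereas now it survives as an error term.

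Substituting back into the Lemma~\ref{lem:R} identity and recalling $J_\epsilon(x)=F(x)+\beta\,\psi_\epsilon(Gx-g)$, this yields
\[
0\ \ge\ R(x^{k+1},x^k)+J_\epsilon(x^{k+1})-J_\epsilon(x^k)+\Bigl(\frac{\beta}{2}\,(\chi_k Gd^k,Gd^k)-\beta\sum_{i\in{\cal I}^k}\phi_\epsilon((Gx^{k+1}-g)_i)\Bigr).
\]
One then invokes the standing hypothesis, which says precisely that the parenthesized term is at least $-\omega'\Vert x^{k+1}-x^k\Vert^2$, together with the coercivity bound $R(x^{k+1},x^k)\ge\omega\Vert x^{k+1}-x^k\Vert^2$ assumed in the theorem, to obtain $(\omega-\omega')\Vert x^{k+1}-x^k\Vert^2+J_\epsilon(x^{k+1})-J_\epsilon(x^k)\le 0$. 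Since $\omega-\omega'>0$, the sequence $\{J_\epsilon(x^k)\}$ is nonincreasing and (being bounded below) convergent, which forces $\sum_k\Vert x^{k+1}-x^k\Vert^2<\infty$; from here global convergence follows as in the preceding theorem.

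The computation is routine; the only real work is bookkeeping of which index contributes where. The point worth stressing — and the reason the hypothesis is posed as it is — is that an index $i$ leaving ${\cal I}^k$ has $[\chi_k]_{i,i}=0$, so the beneficial quadratic term $\frac{\beta}{2}(\chi_k Gd^k,Gd^k)$ only registers motion in the active directions and cannot, on its own, control the penalty mass $\beta\,\phi_\epsilon((Gx^{k+1}-g)_i)$ newly created by the escape. The assumed inequality is exactly what is needed so that, after a deficit of size $\omega'\Vert x^{k+1}-x^k\Vert^2<\omega\Vert x^{k+1}-x^k\Vert^2$ is absorbed into the strong-monotonicity gain supplied by $R$, the regularized functional $J_\epsilon$ still decreases along the iteration. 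When no index escapes, the parenthesized term is nonnegative, so $\omega'=0$ is admissible and the statement collapses to the preceding theorem.
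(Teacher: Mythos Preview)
The paper states this corollary without proof, so there is nothing to compare against; your argument is exactly the intended one, namely a rerun of the theorem's estimate in which the inactive--set contribution $-\beta\sum_{i\in{\cal I}^k}\phi_\epsilon((Gx^{k+1}-g)_i)$ is retained as an error term and then absorbed, via the hypothesis, into the coercivity gap $\omega-\omega'>0$. The bookkeeping and the final summability conclusion are correct; the only cosmetic remark is that the step ``$\phi_\epsilon\ge 0$'' is not what you actually use---what matters is that $\phi_\epsilon((Gx^k-g)_i)=\epsilon/2\ge 0$ for $i\in{\cal I}^k$, so dropping these constants gives the stated lower bound.
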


Algorithm 1 closely resembles to the semismooth  Newton's method
\cite{Ito+Kunisch-Lagrmultapprvari:08} applied to the equation \eqref{Nec0}: The gradient $
\psi^\prime_\epsilon(Gx-g)$ at $x=x^k$ has a Newton derivative
$G^tN_k(Gx^k-g)$ where the diagonal matrix $N_k$ is
defined by
\begin{equation}
[N_k]_{i,i} = \left\{\begin{array}{cc} \ds \frac{1}{\epsilon },&
 (Gx^k-g)_i  \in (0,\epsilon) \\[10pt]
  0,  & \mbox{otherwise}.
\end{array} \right.
\end{equation}
Replacing Step 1 with the system for the semi-smooth Newton step
\begin{equation} \label{N}
(F''(x^k)+\beta G^tN_k G)d_N^k=-J^\prime_\epsilon(x^k),
\end{equation}
one arrives at a semismooth Newton's method. In general, the sequence $\{x^k\}$ generated by the Newton's method
is guaranteed to converge when the initial guess $x^0$ is
sufficiently close to the true solution  $x^\ast_\epsilon$. When
$x^0$ is not close enough to the minimum, taking the full Newton
step $d_N^k$ need not decrease the objective function
$J_\epsilon(x)$, moreover it may generate a non-convergence
sequence. On the other hand, through several numerical experiment
the sequence generated by Algorithm 1 converges to the true solution
within a few iterations even when an initial guess is far from the
true solution. \vspace{2mm}

\begin{lem} If the $k^{th}$ iterate $x^k$ is close to the
solution $x^\ast$ and satisfies $ Gx^k-g \le \epsilon$, then $\chi_k =N_k$ and Algorithm 1
enjoys the superlinear convergence of semi-smooth Newton's method.
\end{lem}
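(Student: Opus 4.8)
The plan is to prove the pointwise identity $[\chi_k]_{j,j}=[N_k]_{j,j}$ for every row $j$ under the hypothesis $(Gx^k-g)_j\le\epsilon$, and then to read off that the direction equation \eqref{d} becomes exactly the semismooth Newton step \eqref{N}, with the true Hessian $F''(x^k)$ replaced by the fixed positive preconditioner $\alpha P\approx F''(x^k)$; the local rate then follows from the classical convergence theory for semismooth Newton methods. For the identity, fix a row $j$ and distinguish three cases. If $(Gx^k-g)_j<0$, then $[\chi_k]_{j,j}=0=[N_k]_{j,j}$ directly from the two definitions. If $(Gx^k-g)_j\in(0,\epsilon)$, then $\max(\epsilon,(Gx^k-g)_j)=\epsilon$, whence $[\chi_k]_{j,j}=1/\epsilon=[N_k]_{j,j}$. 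The only possible discrepancy is on the degenerate set $(Gx^k-g)_j\in\{0,\epsilon\}$, where $\phi_\epsilon'$ has a kink and $N_k$ selects $0$ while $\chi_k$ selects $1/\epsilon$; both values belong to the Clarke generalized derivative of $\phi_\epsilon'$ at that point, so either is an admissible Newton-derivative selection, and in any case this set is excluded once one assumes strict complementarity for the regularized problem \eqref{Reg} (no index with $(Gx^*-g)_j$ equal to $0$ or $\epsilon$) together with $x^k$ close enough to $x^*$. This gives $\chi_k=N_k$.

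Next I would observe that the two hypotheses of the lemma are mutually consistent only in the regime $Gx^*-g<\epsilon$ componentwise — which holds, for instance, for $\epsilon$ large relative to the active constraint violations, where $x^*$ lies near the unconstrained minimizer of $F$ — and that in this regime there is a ball $B(x^*,r)$ on which $Gx-g<\epsilon$ componentwise, the active set $\mathcal A(x)$ is constant, and by the previous step $\chi(x)=N(x)$. On $B$, \eqref{d} reads $(\alpha P+\beta G^tN_k G)d^k=-J_\epsilon'(x^k)$, which is precisely the semismooth Newton iteration \eqref{N} for $J_\epsilon'(x)=F'(x)+\beta G^t\psi_\epsilon'(Gx-g)=0$ as soon as $\alpha P=F''(x^k)$; in particular this is an exact Newton step when $F$ is quadratic and $\alpha P$ equals its Hessian (cf.\ Remark~2). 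The mapping $x\mapsto\psi_\epsilon'(Gx-g)$ is piecewise affine, hence strongly semismooth, with generalized Jacobian generated by $G^tN(x)G$, so $J_\epsilon'$ is semismooth and, under the second-order sufficient optimality condition for \eqref{Reg}, the matrix $F''(x)+\beta G^tN(x)G$ is uniformly invertible on a neighbourhood of $x^*$.

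Granting these facts, the standard local theorem for semismooth Newton methods applies: there is a ball (which we may take inside $B$ and forward invariant) from which the Newton iterates converge superlinearly, and q-quadratically if moreover $F''$ is locally Lipschitz and $\alpha P=F''$; since on that ball the iterates of Algorithm 1 coincide with these Newton iterates, the same rate is inherited, which is the assertion. The main obstacle is precisely the gap between the fixed matrix $\alpha P$ and the true Hessian $F''(x^k)$: a genuine superlinear rate requires either $F$ quadratic with $\alpha P=F''$, or a Dennis--Mor\'e-type estimate $\|(\alpha P-F''(x^*))d^k\|=o(\|d^k\|)$ that a generic fixed $P$ does not satisfy, in which case one only obtains fast linear convergence with rate governed by $\|I-(\alpha P)^{-1}F''(x^*)\|$. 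A secondary, purely technical point — the degenerate indices $(Gx^k-g)_j\in\{0,\epsilon\}$, equivalently the persistence of the condition $Gx^k-g\le\epsilon$ along the iteration — is handled as above by strict complementarity or by the admissibility of both $\chi_k$ and $N_k$ as generalized-Jacobian selections.
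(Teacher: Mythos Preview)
The paper does not prove this lemma; it is stated without proof and the text immediately moves on to the one-dimensional illustration \eqref{simple}. So there is nothing in the paper to compare against beyond the bare assertion.

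Your argument is essentially correct and, in fact, more honest than the lemma itself. The case analysis for $\chi_k=N_k$ is right: under $(Gx^k-g)_j\le\epsilon$ the only indices where the two definitions disagree are the measure-zero cases $(Gx^k-g)_j\in\{0,\epsilon\}$, and your handling via strict complementarity (or via both values lying in the Clarke subdifferential of $\phi_\epsilon'$) is the standard and correct way to dispose of them. You also correctly observe that for the identification to persist along the iteration one needs a neighbourhood of $x^*$ on which $Gx-g<\epsilon$ componentwise; this is implicit in the lemma's ``close to $x^*$'' but never made precise in the paper.

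Most importantly, you put your finger on a genuine gap in the lemma's statement: even after $\chi_k=N_k$, the Algorithm~1 step \eqref{d} uses $\alpha P$ where the semismooth Newton step \eqref{N} uses $F''(x^k)$, so the two iterations literally coincide only when $\alpha P=F''(x^k)$ (e.g.\ $F$ quadratic with $P$ chosen accordingly, as in Remark~2). For a generic fixed preconditioner the rate is linear, governed by $\|I-(\alpha P+\beta G^tN_kG)^{-1}(F''(x^*)+\beta G^tN_kG)\|$, not superlinear. The paper elides this distinction; your Dennis--Mor\'e remark is the right diagnosis. In short: your proof is sound under the additional hypothesis $\alpha P=F''$, and you are right that without it the superlinear claim does not follow.
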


We shall investigate Algorithm 1 and the semi-smooth Newton through
a simple problem: we consider the
optimization problem
\begin{equation}\label{simple}
\mbox{minmize } \frac{1}{2} \|x\|^2 + \beta\, \psi_\epsilon(x-g).
\end{equation}
In this case \eqref{Fix} and \eqref{N}
are explicitly given as
\begin{equation}\label{Fix1}
x^{k+1}=\Phi(x^k), \quad \Phi(x)=x - \frac{x + \gamma
\frac{\max(x-g,0)}{\max(x-g,\epsilon)}}{1 + \gamma
\frac{\text{sign}(x-g)}{\max(x-g,\epsilon)}}, \quad
\text{sign}(s)=\left\{
\begin{array}{cc}
1, &   s \ge 0,\\
0,  &  s  < 0.
\end{array}\right.
\end{equation}
and
\begin{equation} \label{Fix2}
x^{k+1} = \Phi_N(x^k), \quad \Phi_N(x)=x - \frac{x + \gamma
\frac{\max(x-g,0)}{\max(x-g,\epsilon)}}{1 + \gamma
\frac{\text{sign}(x-g)}{\epsilon}}.
\end{equation}
It is easy to prove that the
sequence $\{x^k\}$ generated by the iteration \eqref{Fix1} and
\eqref{Fix2} converges to $\bar{x} = g +
(-g)\frac{\epsilon}{\epsilon + \beta}$ for any initial $x^0$ provided that $g<0$ and $\beta >
\epsilon - g$. On the other hand, if we assume that $g>0$, then for
any initial $x^0$ we have $x^k=0$ for all $k\ge 2$.

We depict $y=\Phi(x)$, $y=\Phi_N(x)$ in Fig. \ref{fig:ours} and
\ref{fig:newton} respectively. The outcomes after three
iterations starting from $x_0=-1.2$ are also
plotted to visualize the iteration process. The
parameters $g=-1$, $\epsilon = 10^{-1}$ and $\beta=2$ were used to
draw these graphs. (We select a large $\epsilon=10^{-1}$ just for
the purpose of the visualization. In practice, we will take much
smaller number, say, $\epsilon=10^{-6}$.) We observe
from the figure that the performance of Algorithm 1 is much better
than the one of the semismooth Newton's method.
\begin{figure}[H]
  \begin{minipage}[b]{0.5\linewidth}
    \centering
    \includegraphics[width=\linewidth]{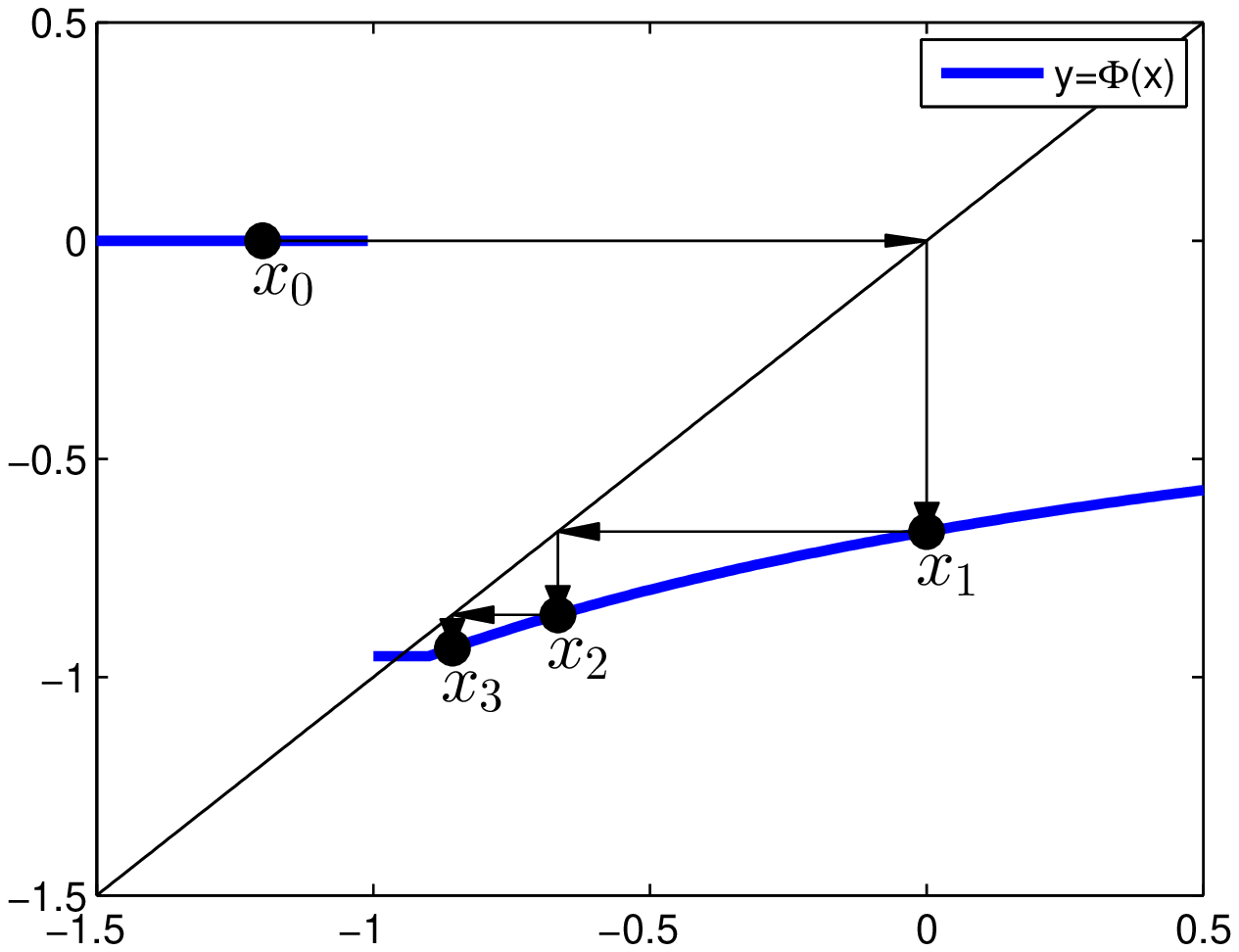}
    \caption{Visualization of
       convergence pattern of the successive iteration algorithm 1,
    $x_{k+1}=\Phi({x_k})$.}
    \label{fig:ours}
  \end{minipage}
  \hspace{0.5cm}
  \begin{minipage}[b]{0.5\linewidth}
    \centering
    \includegraphics[width=\linewidth]{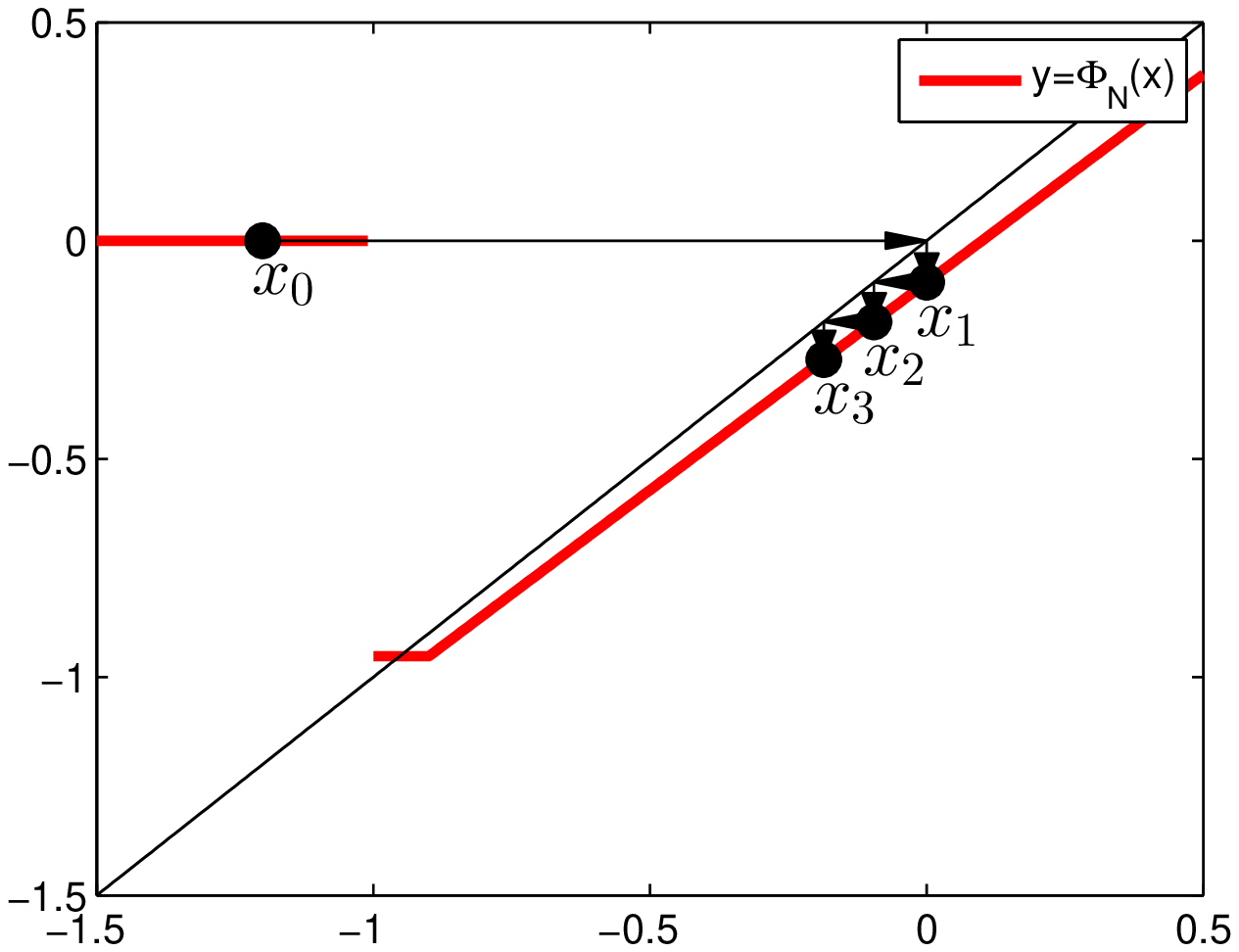}
    \caption{Visualization of
       convergence pattern of the nonsmooth Newton's method, $x_{k+1}=\Phi_N({x_k})$.}
    \label{fig:newton}
  \end{minipage}
\end{figure}
\subsection{Successive iteration with the line search}

Since $d^k$ determined by \eqref{d} is a descent direction of
$J_\epsilon$, one can use the line search method; \vspace{2mm}

\noindent{\bf Algorithm 2: Successive iteration with line search}\\[10pt]
Step 0. Set parameters : $\gamma, \epsilon, P$.\\[10pt]
Step 1. Compute the direction by
\begin{equation}\label{dk}
(P + \gamma G^t\chi^k_\epsilon  G )d^k = - J^\prime(x^k).
\end{equation}
Step 2. Determine the steplength $\alpha_k$ by minimizing $J(x^k   +
\alpha d^k)$, i.e., $
 \alpha^k = \arg \min_{\alpha } J(x^k   + \alpha d^k).
 $ \\[10pt]
Step 3. Update $  x^{k+1}= x^k   +\alpha^k d^k. $\\[10pt]
If $|J^\prime(x^k)|<TOL$, then stop. Otherwise repeat Step 1 - Step 3.\\[20pt]
\noindent Step 2 can be replaced by the line search algorithms such
as Armijo's rule \cite{Ito+Kunisch-Lagrmultapprvari:08}. The proof of the convergence of Algorithm 2 is
quite standard and we omit the proof.
\section{Semi-smooth method}
An alternative to our gradient-based algorithm is the Newton update.
The semi-smooth Newton method in \cite{Ito+Kunisch-Lagrmultapprvari:08} is based on
complementarity condition for $F^\prime+G^t\mu=0$;
$$
\mu =\max(0, \mu+\gamma\, (Gx-g)), \qquad \forall \gamma>0.
$$
The semi-smooth Newton method reduces to the
Primal-Dual Active set method \cite{Ito+Kunisch-Lagrmultapprvari:08};

\noindent{\bf Primal-Dual Active set method}
\begin{itemize}
\item Choose $(x^0,\mu^0)$ and set $k=0$.
\item Set
${\cal A}^k =\{j:(\mu^k+\gamma(Gx^k-g))_j>0\}$ and ${\cal
I}^k=\{i:|(\mu^k+\gamma(Gx^k-g))_i\le 0\}$.
\item Solve for $(x^{k+1},\mu^{k+1})$
$$\begin{array}{l}
F^\prime(x^{k+1})+G^t\mu^{k+1}=0,\quad \mu^{k+1}=0 \mbox{ on }{\cal
I}^k
\\ \\
(Gx^{k+1}-g)_j= 0 \mbox{  on  }{\cal A}^k.
\end{array} $$
\item Convergent or set $k=k+1$ and Return to Step 2.
\end{itemize}
\noindent
\begin{rem} (1) If $F$ is quadratic, i.e.,
$F^\prime=Ax-b$, then Step 3 is written as
\begin{equation} \label{Lin}
\left(\begin{array}{ccc} A & (G^k )^t \\
G^k& 0
\end{array}\right)
\left(\begin{array}{c}x^{k+1}\\
\mu^{k+1}\end{array}\right)=
 \left(\begin{array}{c} b\\  g
\end{array}\right),\quad \mbox{ on } A^k.
\end{equation}
If $A>0$, then Step 3 is solvable. Otherwise, we assume that $A$ is
positive on $N((G(j,:))$.

\noindent (2) If $F$ is $C^2$, the Newton step for Step 3 is given
by
$$
F^{\prime\prime}(x^{k+1}-x^k)+F^\prime(x^k)+G^t\lambda^{k+1}=0
$$

\noindent (3) In general one may use the regularized update
$$
\left(\begin{array}{ccc} A & (G^k)^t   \\
G^k &-\epsilon\, I
\end{array}\right)
\left(\begin{array}{c}x^{k+1}\\
\mu^{k+1}
\end{array}\right)= \left(\begin{array}{c}
 b\\
 g\end{array}\right)
$$
to avoid the possible singularity of linear system \eqref{Lin}. It
reduces to
$$
Ax^{k+1}+\beta\,  (G^k)^t \frac{\chi_{{\cal
A}^k}}{\epsilon}\left( G^k x^{k+1}-g\right)=b,
$$
which is very similar to \eqref{Sp} with $\alpha=0$. Consequently,
Algorithm 1 is much stabler than Prima-Dual Active method.

\noindent (4) But, it is shown in \cite{Ito+Kunisch-Lagrmultapprvari:08} if the Primal-Dual
Active method converges, it converges q-super linearly and in a
finite step.

\noindent (5) One can hybrid Algorithm 1 and Prima-Dual Active
method so that one may accelerate the convergence.
\end{rem}

\section{Numerical tests}%
In this section we show some numerical experiments using Algorithm 1 proposed
in Section 2 for unilateral constrained quadratic optimization problems
$$
F(x)=\frac{1}{2}(x,Ax)-(x,b),\quad Gx \le g
$$
with several $A,b,G$ and $g$.
All tests confirm the fact convergence and effectiveness of the proposed
algorithm.
\subsection{Example 1: Obstacle problem}
Let $\Omega=[0,1]\times[0,1]$. We solve an obstacle problem
\begin{equation}\label{P1}
\begin{array}{l}
  \ds \min_{u \in K} \frac{1}{2}\int_{\Omega} \vert \nabla u\vert^2 \; dx - C\int_{\Omega}u(x) \; dx
  \\ [10pt]
\ds K = \{ u\in H^1_0(\Omega) \mid   u(x)   \le \delta (x, \partial \Omega)\},
\end{array}
\end{equation}
where $C=10$ is used and $\delta(x,\partial\Omega)$ is distance from $x$ to $\partial\Omega$;
\begin{equation*}
\delta(x,\partial\Omega)
 =  \frac{1}{2}\left(1-\max(\vert 2x-1 \vert, \vert 2y-1\vert )\right).
\end{equation*}
We use the standard bilinear finite element method to discretize the problem: For Cartesian grid $(x_{i},y_{j})=(\frac{i}{n},\frac{j}{n})$, $0\le i,j\le n$,
we define a finite element by $(K,Q_h,\mathcal{N})$; the element domain $K$ is a rectangle, $K=[x_i,x_{i+1}]\times [y_j,y_{j+1}]$, and the space of shape functions $Q_h=u_h \in  L^2(\Omega)$ is given by
\begin{equation*}
{u_h|}_{K}=[1,\frac{x-x_{i,j}}{\Delta x}]\otimes [1,\frac{y-y_{i,j}}{\Delta x}][u_{i,j},u_{i+1,j},u_{i+1,j+1},u_{i,j+1}]^\top.
\end{equation*}
and $\mathcal{N}$ is nodal variables at the grid points. The subscript $h$ indicates the mesh size $h=\frac{1}{n}$.The finite element discretization yields the discrete energy functional
\begin{align*}
\frac{1}{2}\int_{\Omega} \vert \nabla u_h\vert^2 \; dx-C\int_{\Omega}u_h \; dx
= \frac{1}{2} (U_h,H_h U_h)_{\mathbb{R}^n}- (F_h, U_h)_{\mathbb{R}^n}
\end{align*}
for $u_h \in Q_h$, where  $U_h=(u_{0,0},u_{0,1},\ldots,u_{n,n})$ and
$H_h$ and $F_h$ denote the stiffness matrix and the load vector associated with the discretization.
The inequality constrained is approximated by
$     u_h(z_{i,j})   \le \delta (z_{i,j}, \partial \Omega),\quad z_{i,j}=(x_{i},y_{j})
$, which is equivalent to
$
  U_h \le g
$,
where $g=(\delta (z_{0,0}, \partial \Omega),\ldots,\delta (z_{n,n}, \partial \Omega),-\delta (z_{0,0}, \partial \Omega),\ldots,-\delta (z_{n,n}, \partial \Omega) )^\top$.

Let $\{U_h^k\}_k$ denote the generated sequence by Algorithm 1. We report
\begin{equation*}
  J_\epsilon(U_h^k)= \frac{1}{2}(U_h^k,H_h U_h^k)_{\R^{n^2}}- (f_h, U_h^k)_{\R^{n^2}} +
\beta \psi_\epsilon(U_h^k - g_h ) .
  \end{equation*}
and the sup norm of the gradient of $J_\epsilon(U_h^k)$:
\begin{equation*}
  \Vert \nabla J_\epsilon(U_h^k) \Vert_{\infty} = \max_{i} \vert \left( H_h U_h^k - f_h + \beta  \nabla \psi_\epsilon( U_h^k-g_h) \right)_i\vert.
  \end{equation*}

We run Algorithm 1 with the following parameters and preconditioner:
\begin{enumerate}
\item mesh size $h=0.02$, $\beta = 0.01$, $P=H_h$, $\alpha=1$, $\epsilon =  h^2$.
\item mesh size $h=0.01$, $\beta = 0.01$, $P=H_h$, $\alpha=1$, $\epsilon =  h^2$.
\end{enumerate}
Fig. \ref{fig:J_r_implicit2} shows the monotone convergence of the objective function $J_\epsilon$: the convergence achieves after 11 iteration for $h=0.02$, and 20 iteration for $h=0.01$.

 \begin{figure}[H]
  \begin{center}
   \includegraphics[height=4.5cm]{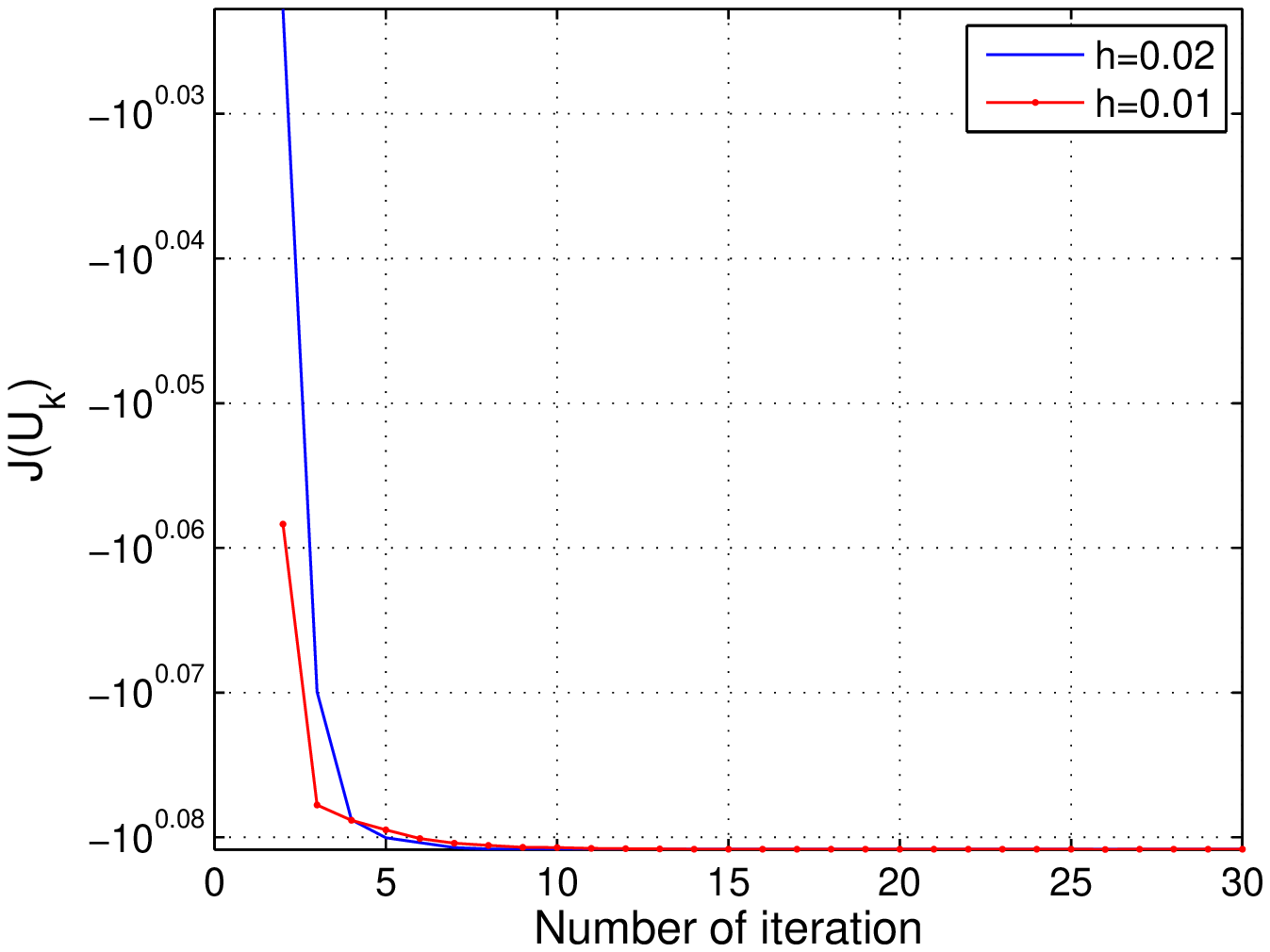}
   \includegraphics[height=4.5cm]{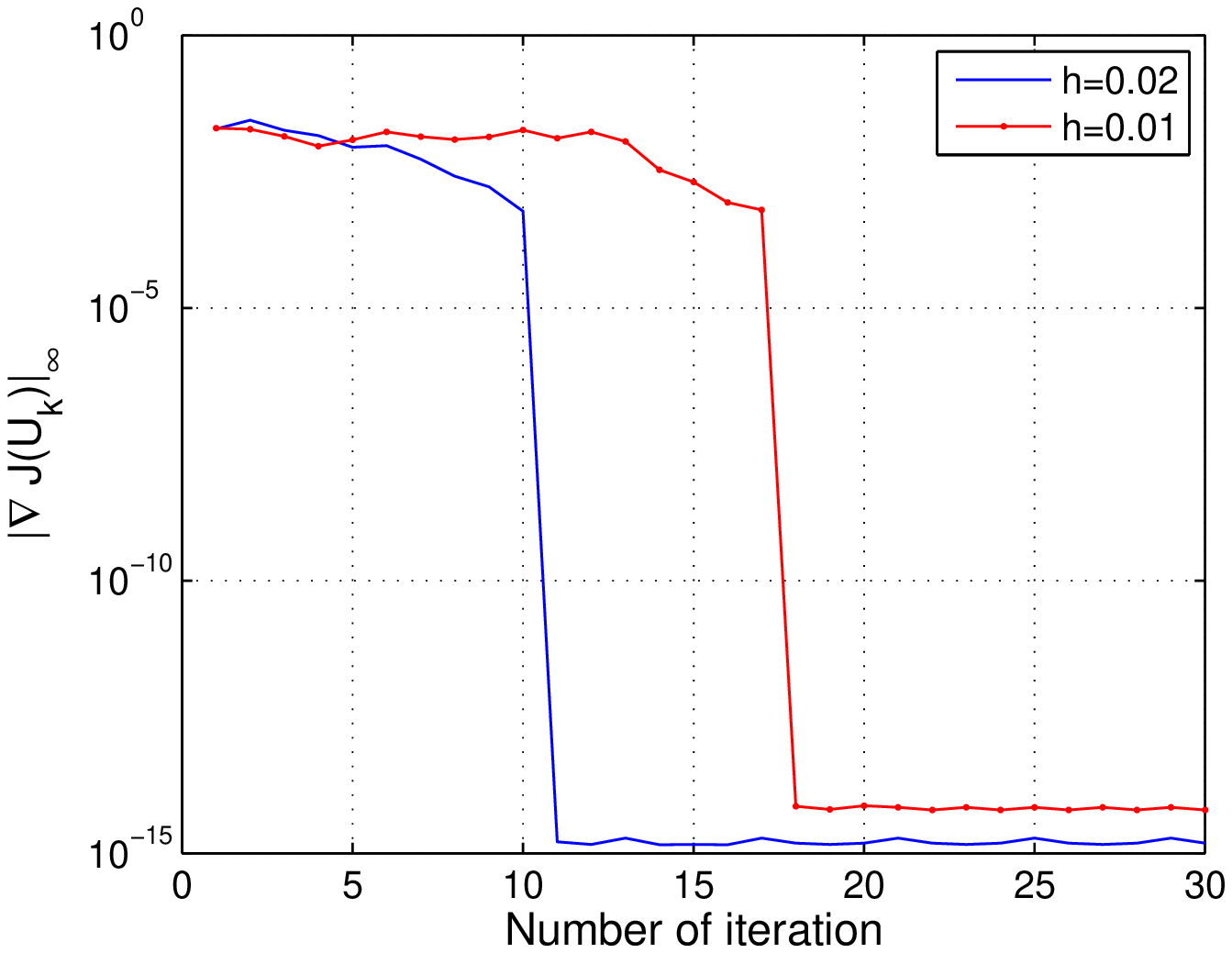}
  \end{center}
  \caption{The monotone decreasing of $J(U^k_h)$ and $\Vert \nabla J(U^k_h)\Vert_{\infty}$ of the finite element solutions $\{U_h^k\}$ generated by Algorithm 1 with
   $\beta = 0.01$, $P=H_h$, $\alpha=1$ and $\epsilon =  h^2$.}
 \label{fig:J_r_implicit2}
\end{figure}

\subsection{Example 2: Inverse source identification problem}
Let $\Omega=[0,1]^2$. The problem consists in recovering the source term $u \in L^2(\Omega)$ in the equation
\begin{equation}\label{1.1}
-\Delta y = u \; \mbox{ in } \Omega,\quad y = 0, \; \mbox{ on } \Gamma,
\end{equation}
from the noisy data $y_\delta(x) \in L^2(\Omega)$ such that $y_\delta = y(x) + \delta(x)$ where $\delta(x)$ is an additive (unknown) noise. We assume that the source term $u$ is constrained; $ 0\le u(x) \le 1 \; \mbox{ for a.e. } x\in \Omega$.
Let $y(u)$ denote the (weak) solution of \eqref{1.1}.
The problem is well-known to be ill-posed and an approximation to the solution $u$ can be obtained
by Tikhonov regularization method:
\begin{equation*}
\min \quad F(u) = \frac{1}{2} \int_\Omega (y(u) - y_\delta )^2 dx + \frac{\eta}{2}\int_\Omega u^2 dx,
\end{equation*}
\begin{equation*}%\label{1.2}
u \in L^2(\Omega), \; 0\le u(x) \le 1 \; \mbox{ for a.e. } x\in \Omega,
\end{equation*}
where $y_\delta \in L^2(\Omega)$, $\eta>0$ is a regularization parameter.
Algorithm 1 requires the computation of the gradient $F^\prime(u)$. One can calculate the gradient by
$$
F^\prime(u)= p  + \eta\; u,
$$
where the adjoint variable $p$ is obtained by solving the adjoint equation
$$
-\Delta p = y(u)-y^\delta.
$$
In our computation,
the noisy data $y_\delta$ is generated by adding a random noise to the observation data $y$:
\[
y_\delta (x) = y(x) + \mbox{rand}(x),
\]
where rand($x$) is a uniformly distributed random function in $[-1, 1]$, and $\delta$ is the noise level. The unknown source (exact solution) $u$ and the noise free data $y$ are depicted in the first row of Fig. \ref{fig:is}.

The domain $\Omega=[0,1]^2$ is divided into $60^2$ subsquares of the mesh size $h=\frac{1}{60}$. The central finite difference method is used to approximate $-\Delta u$ at $x_{i,j}$;
\begin{equation}\label{lap}
-\frac{4u_{i,j}-u_{i+1,j}-u_{i-1,j}-u_{i,j+1}-u_{i,j-1}}{h^2}.
\end{equation}
And we approximate $F(u)$ as follows:
\begin{equation}
\frac{1}{2} \int_\Omega (y(u) - y_\delta )^2 dx + \frac{\eta}{2}\int_\Omega u^2 dx
\approx
\frac{h^2}{2} \sum_{i,j}( [y(u)]_{i,j} -[ y_\delta ]_{i,j})^2 + \frac{h^2\eta}{2}
\sum_{i,j} u_{i,j}^2.
\end{equation}
Hence the discretized exact penalty problem is equivalent to
\begin{equation}\label{ips}
\min_{u\in \R^{n^2}}
\frac{1}{2}\Vert K^{-1}u - y_\delta\Vert^2_{\R^{n^2}} + \frac{\eta}{2}\Vert u\Vert^2_{\R^{n^2}} +
\beta \;\psi_\epsilon(u).
\end{equation}
Here $K$ is the matrix for the second order central difference associated to \eqref{lap}.
We employed Algorithm 1 to the problem \eqref{ips} with parameters
$\alpha = 1$, $\beta = 1$, $\epsilon = h^2$. The preconditioner $P=K^{-t} K^{-1}+\eta I$ is used:
%let $K$ be a discrete Laplacian for Dirichlet problem.
Step 1 in Algorithm 1 is written as
  \begin{equation*}
    ( [K^{-1}]^t K^{-1}   +\eta  I  + \beta\chi_\epsilon(u^k) )d^k = -(   p  +   \beta u^k + \beta \psi_\epsilon^\prime(u^k))
  \end{equation*}
  which is equivalently written as
  \begin{equation*}
   ( I + \eta K^2  +  \beta K^2\chi_\epsilon(u^k) )d^k =- K^2(  p  +  \eta u^k + \beta \psi_\epsilon^\prime(u^k)),
  \end{equation*}
where we use $K^t=K$.
The reconstructed source obtained by the nonsmooth Tikhonov regularization with the regularization parameter $\eta$, and the noisy data with noise level $\delta$ are shown in Fig. \ref{fig:is}. We observed that Algorithm 1 converged (i.e., $\vert J_\epsilon(u^k)\vert_\infty < 10^{-14}$) within 20 iterations in all cases. The regularization parameter $\eta$ was selected manually according to the noise level $\delta$. The study of automated selection of $\eta$ can be found in vast literature on Tikhonov regularization.
 \begin{figure}[htp]
  \begin{center}
   \includegraphics[height=4.5cm]{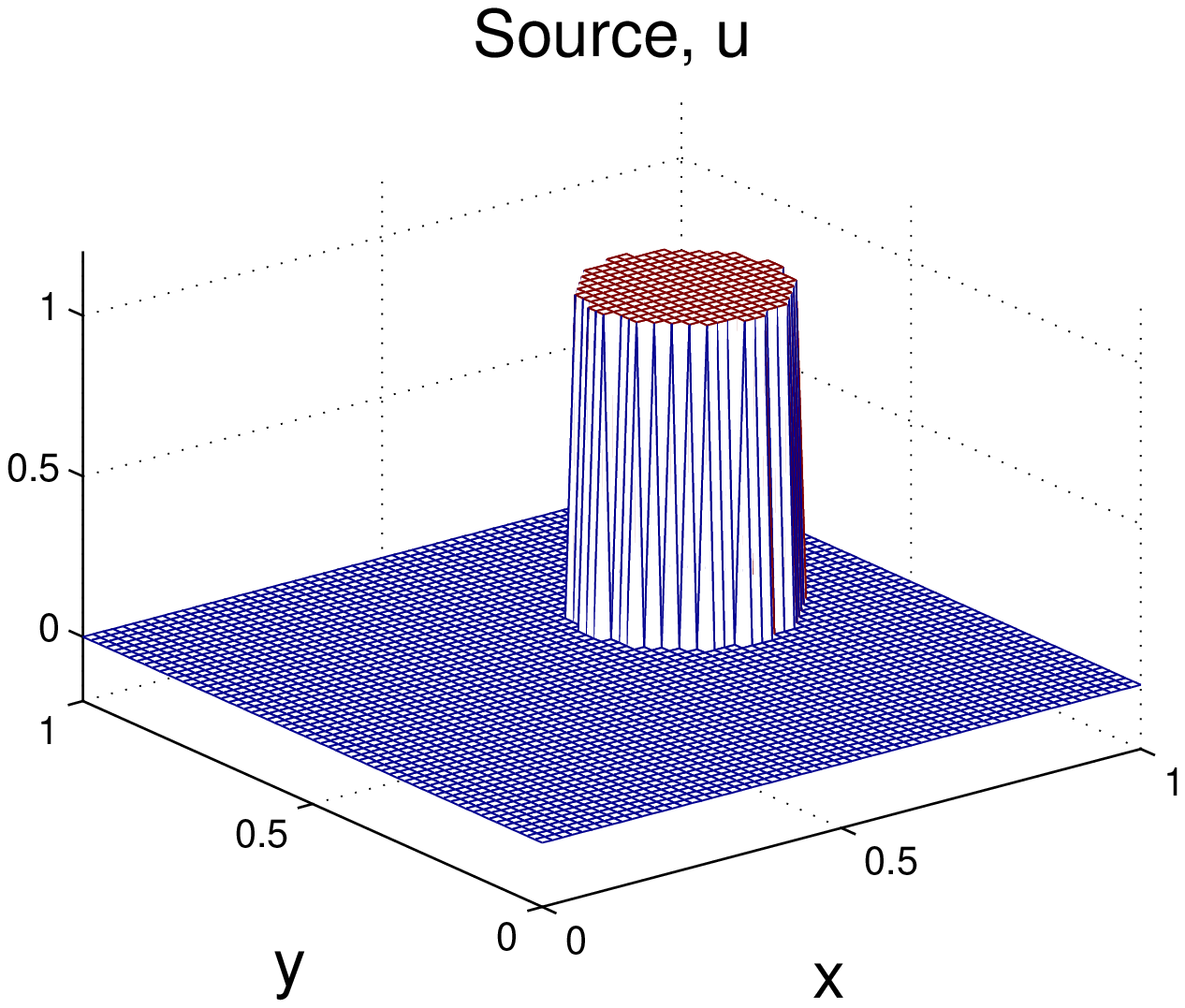}
   \includegraphics[height=4.5cm]{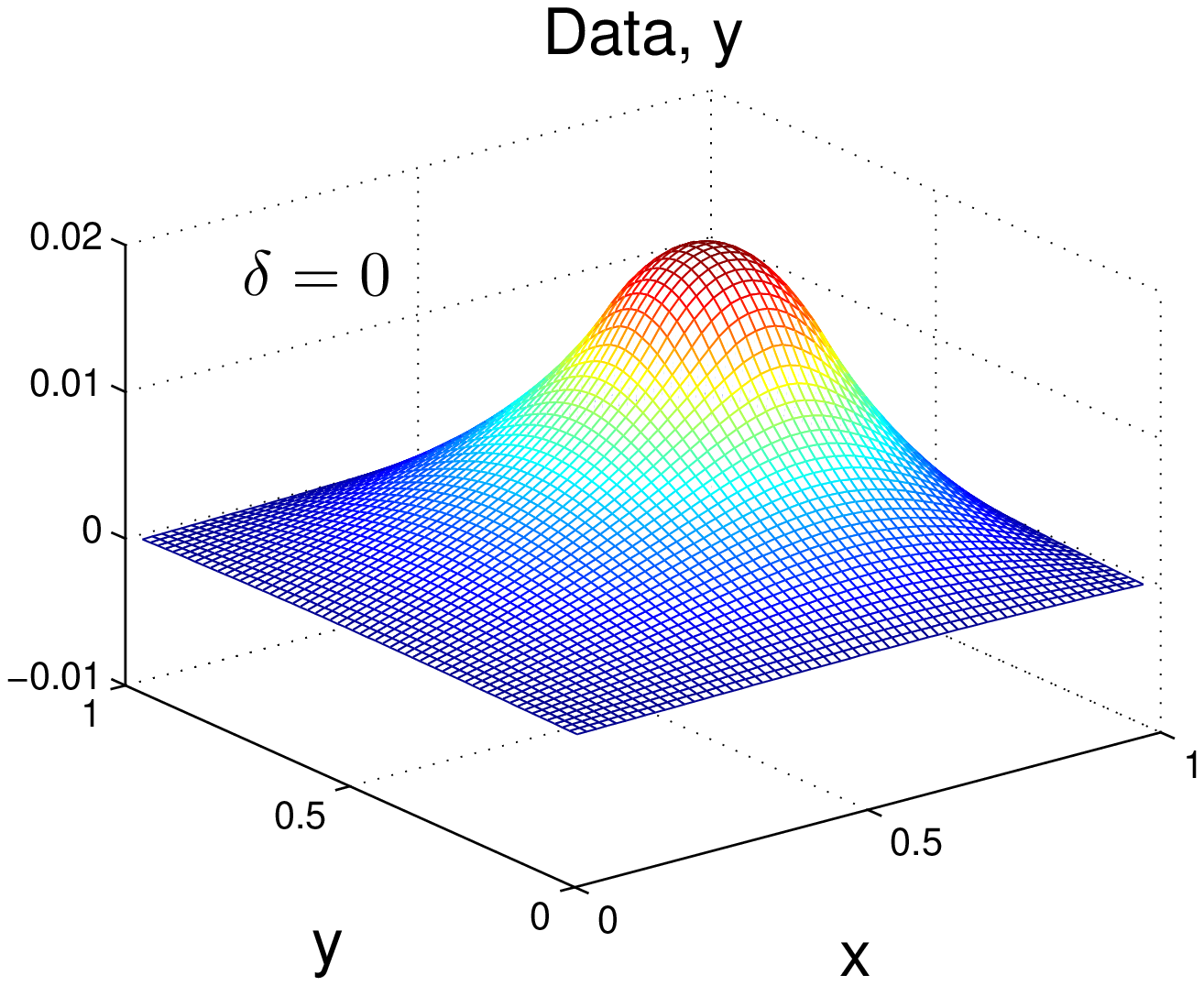}
   \includegraphics[height=4.5cm]{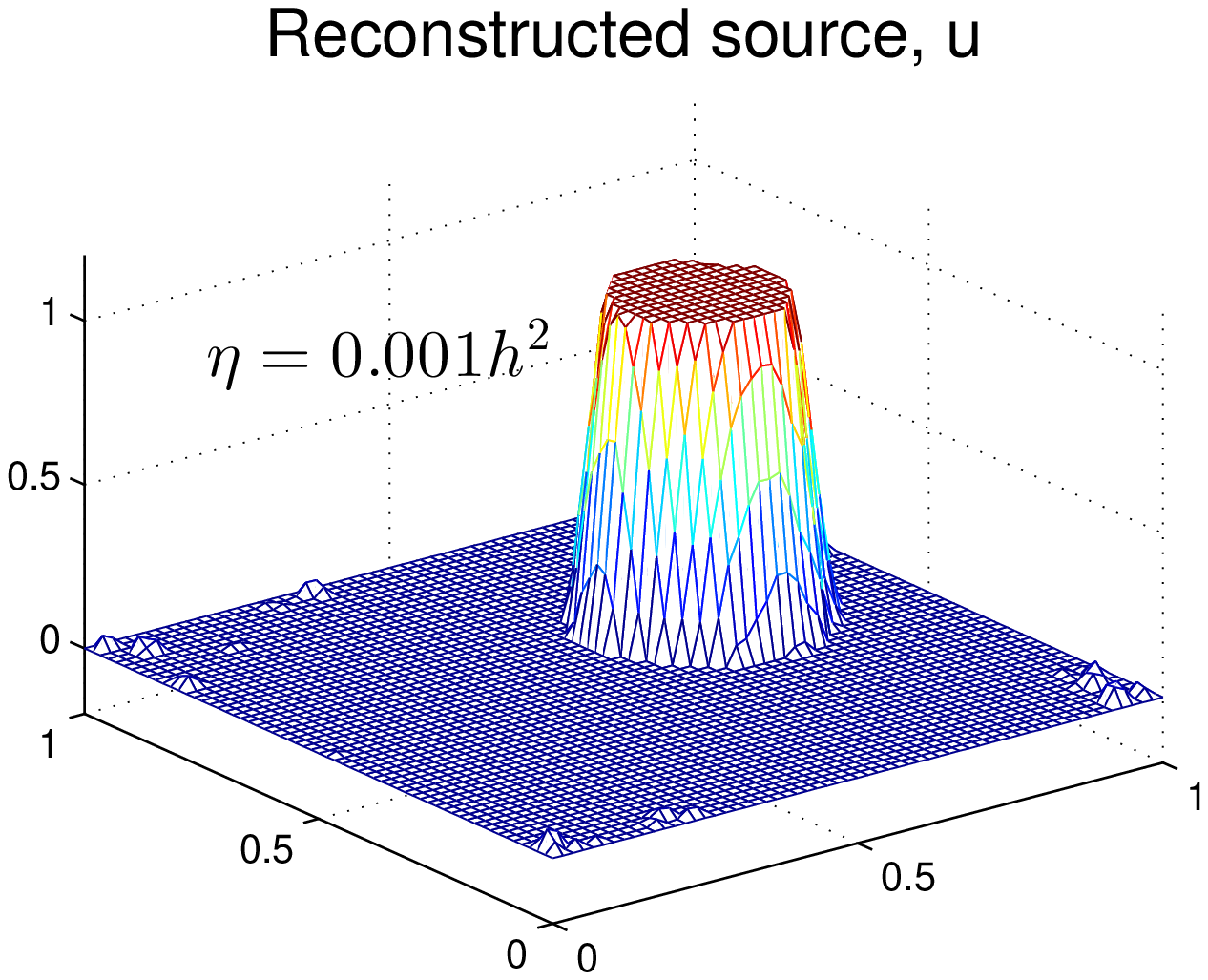}
   \includegraphics[height=4.5cm]{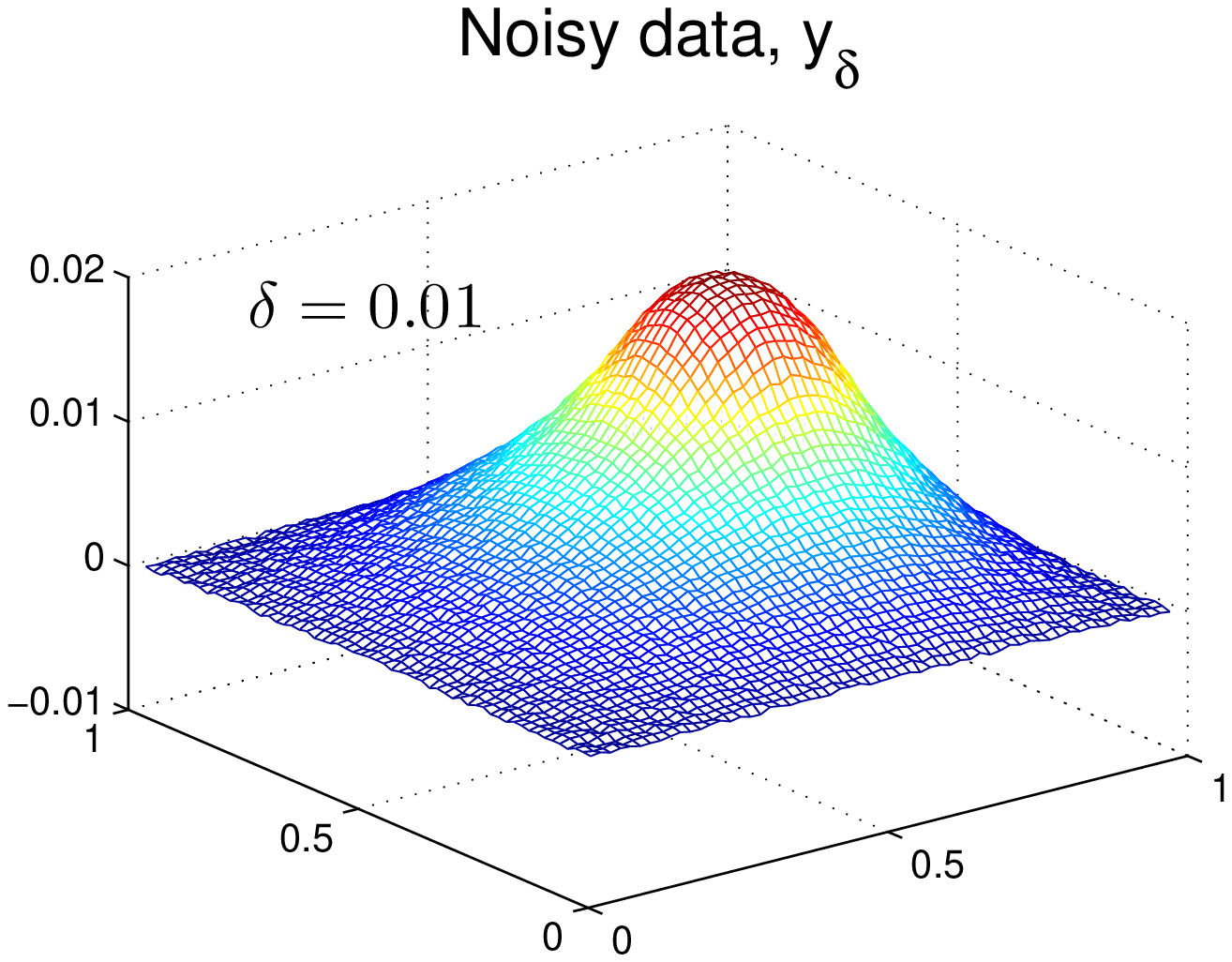}
   \includegraphics[height=4.5cm]{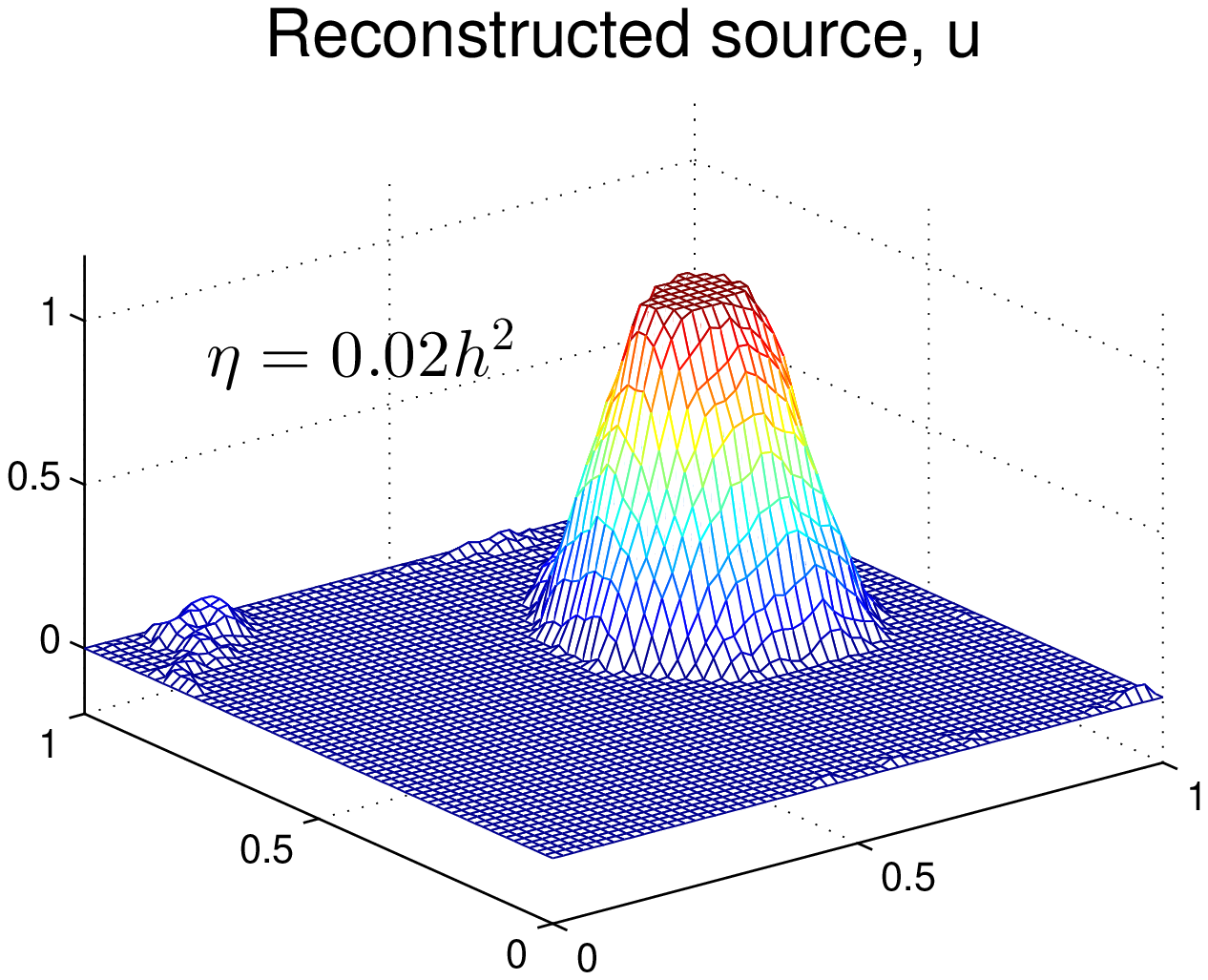}
   \includegraphics[height=4.5cm]{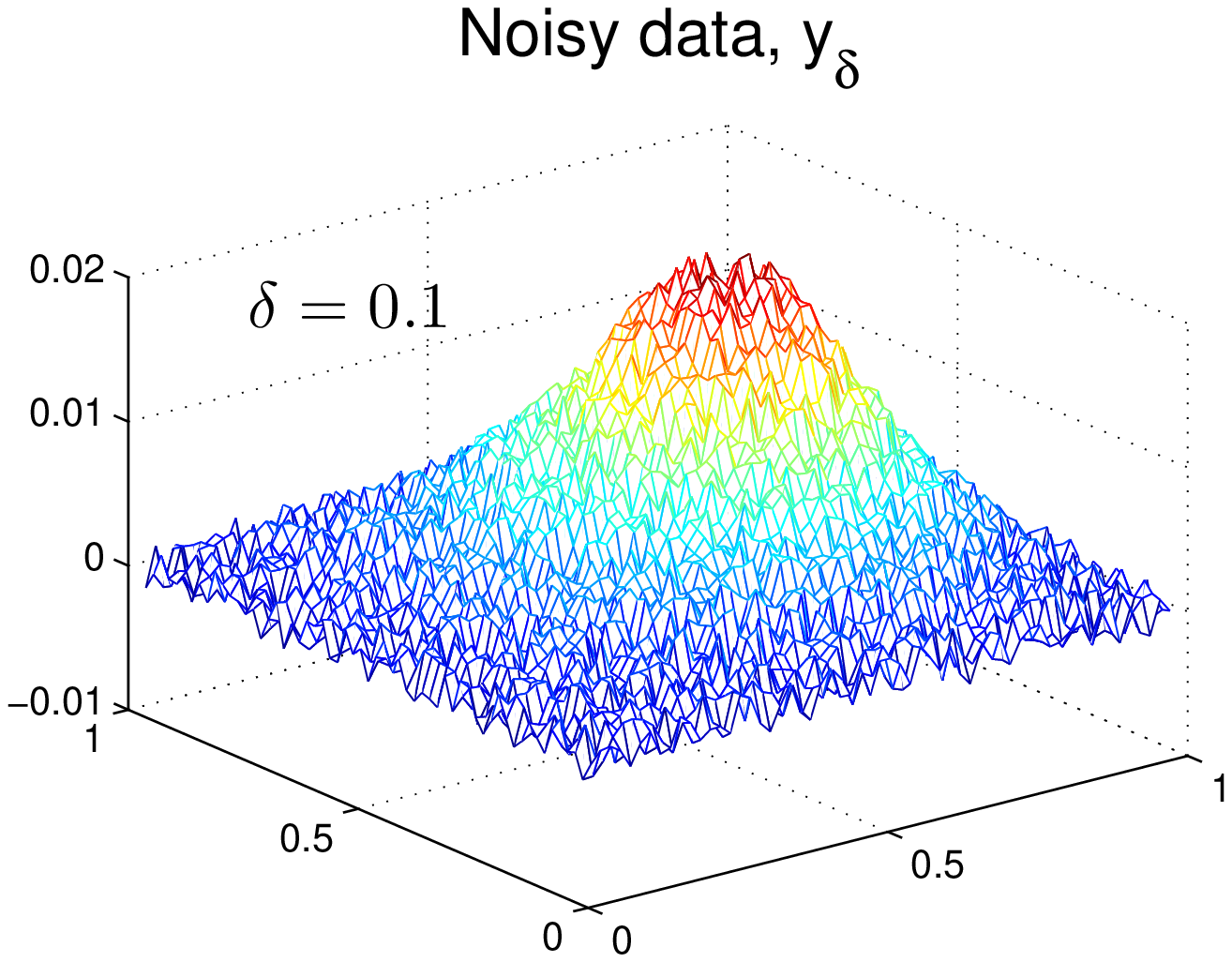}
   \includegraphics[height=4.5cm]{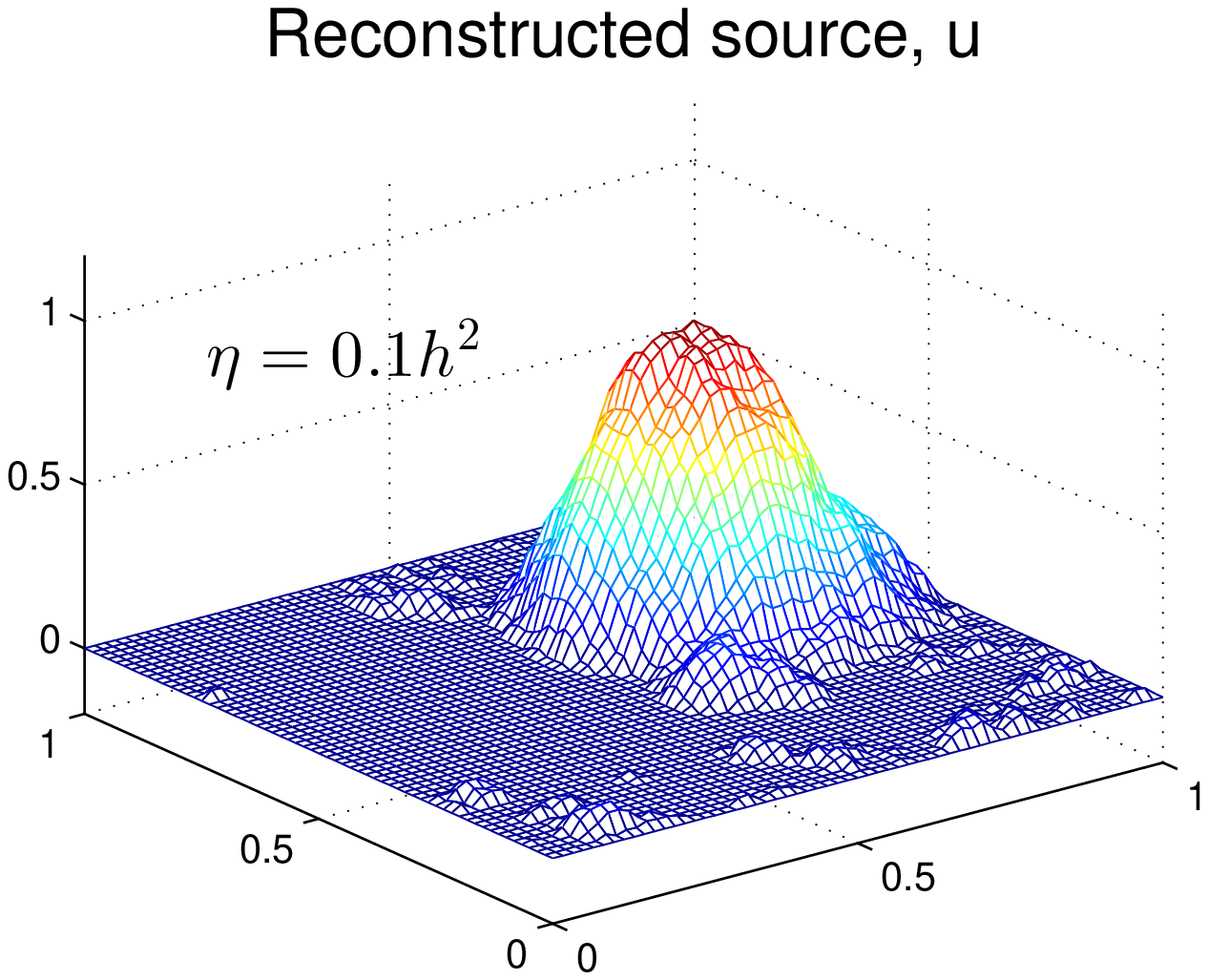}
   \includegraphics[height=4.5cm]{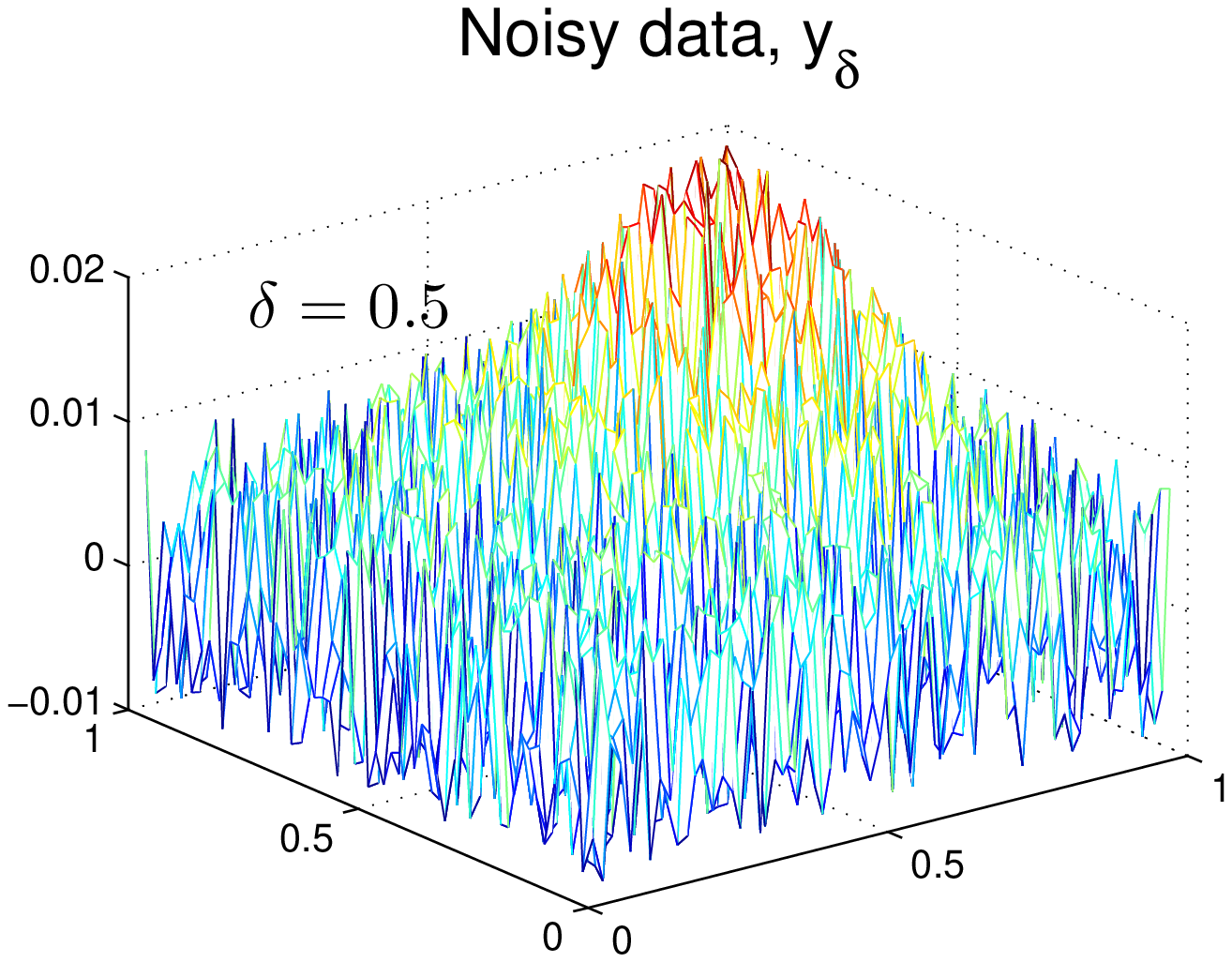}
  \end{center}
  \caption{Example 2 (Inverse source identification). Reconstructed source (left) and the noisy data (right). Mesh $60 \times 60$. $h=\frac{1}{60}$. Parameters used in Algorithm 1 are;
$\alpha = 1$, $P=K^\ast K + \eta $, $\beta = 1$ and $\epsilon = h^2$. The regularization parameter $\eta$ was selected manually according to the noise level $\delta$. }
  \label{fig:is}
\end{figure}

\subsection{Inverse medium problem}
Consider the inverse medium problem; determine the potential
function $u(x)\ge 0$ in
\begin{equation} \label{Mid}
-\Delta y+u y=f ,\quad y\in H^1_0(\Omega)
\end{equation}
from measurement $y_\delta(x) = y(x) + \delta(x) $ of the potential $y$. The problem can be
casted as a constrained least square problem; find $u$
$$
\min \frac{1}{2}| y(u)-z_\delta|^2_{L^2(\Omega)}+\frac{\eta}{2}|u|^2_{L^2(\Omega)}
$$
subject to $0\le u\le U$ with a priori upper bound $U$, where
$y(u)$ is the solution to \eqref{Mid}. One can calculate
$F^\prime(u)$ using the adjoint equation
$$
-\Delta p+u p= y(u)-z_\delta
$$
i.e.,
$$
F^\prime(u)=-y(u) p  + \eta u.
$$
In the computation, we use the function in Fig.\ref{fig:ip} (top left) as the unknown potential to be recovered. All the noisy data $y_\delta$ was generated by adding a random noise to the exact data $y$:
\[
y_\delta (x) = y(x) + \delta \max_{x\in\Omega}\{y(x)\}\mbox{rand}(x),
\]
where $y$ was computed by solving the equation $y=(-\Delta + u)^{-1} f$ with $f = 10$.
The noise free data $y$ is depicted in Fig.\ref{fig:ip} (top right). \\
As the preconditioner in Algorithm 1, we used $P=\frac{I}{100} + \eta $. Since $G=I$, the matrix $P+\beta \chi_\epsilon(u^k)$ in Step 1 is diagonal.
Hence, the computation of the decent direction $d^k$ is cheap but one faces the slow convergence of the algorithm due to the poorly chosen preconditioner. More than 1000 time iteration was required to meet the stopping criterion $\vert \nabla J_\epsilon(u^k)\vert_\infty < 10^{-5}$  in each test.
 \begin{figure}[htp]
  \begin{center}
   \includegraphics[height=4.5cm]{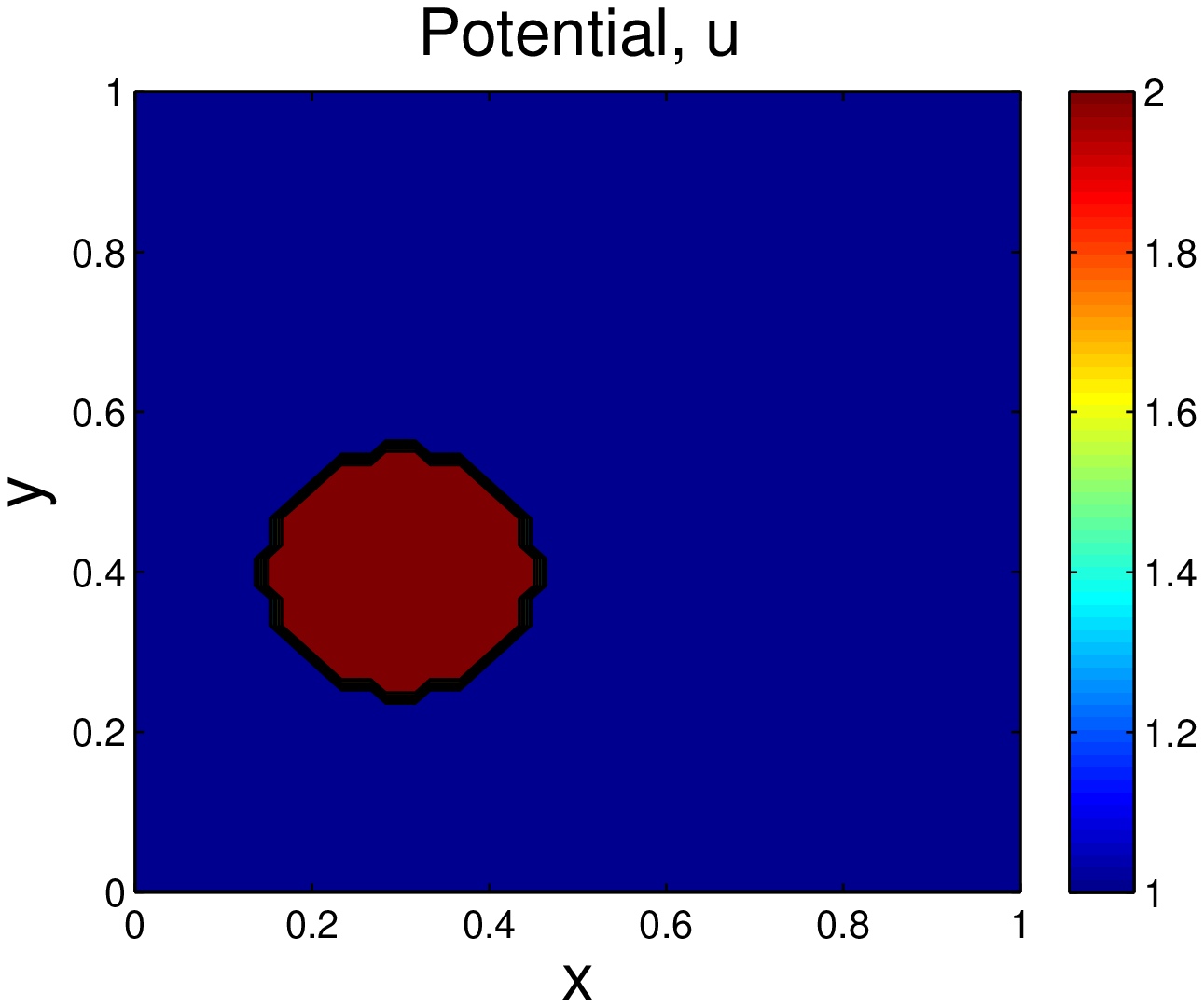}
   \includegraphics[height=4.5cm]{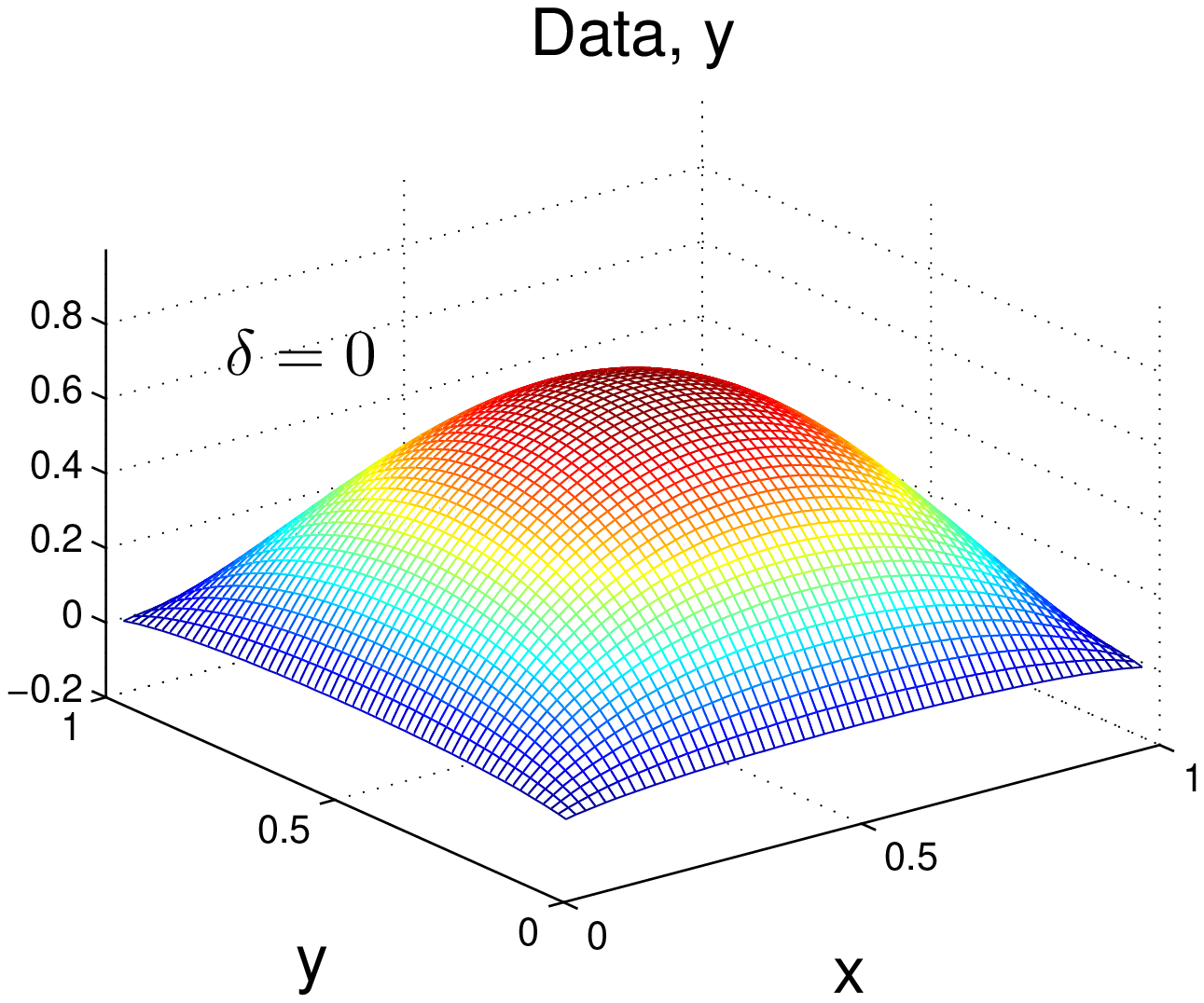}
   \includegraphics[height=4.5cm]{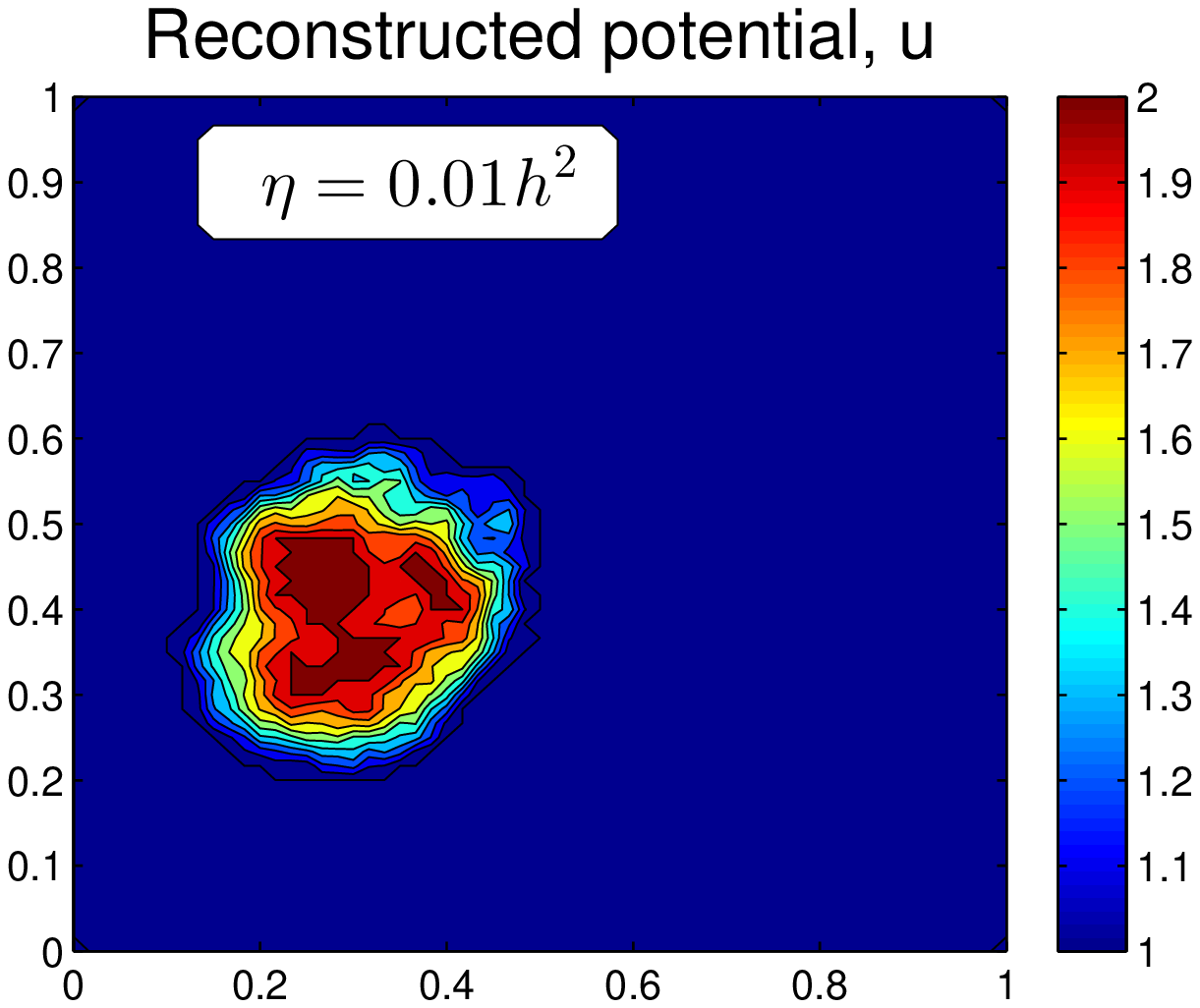}
   \includegraphics[height=4.5cm]{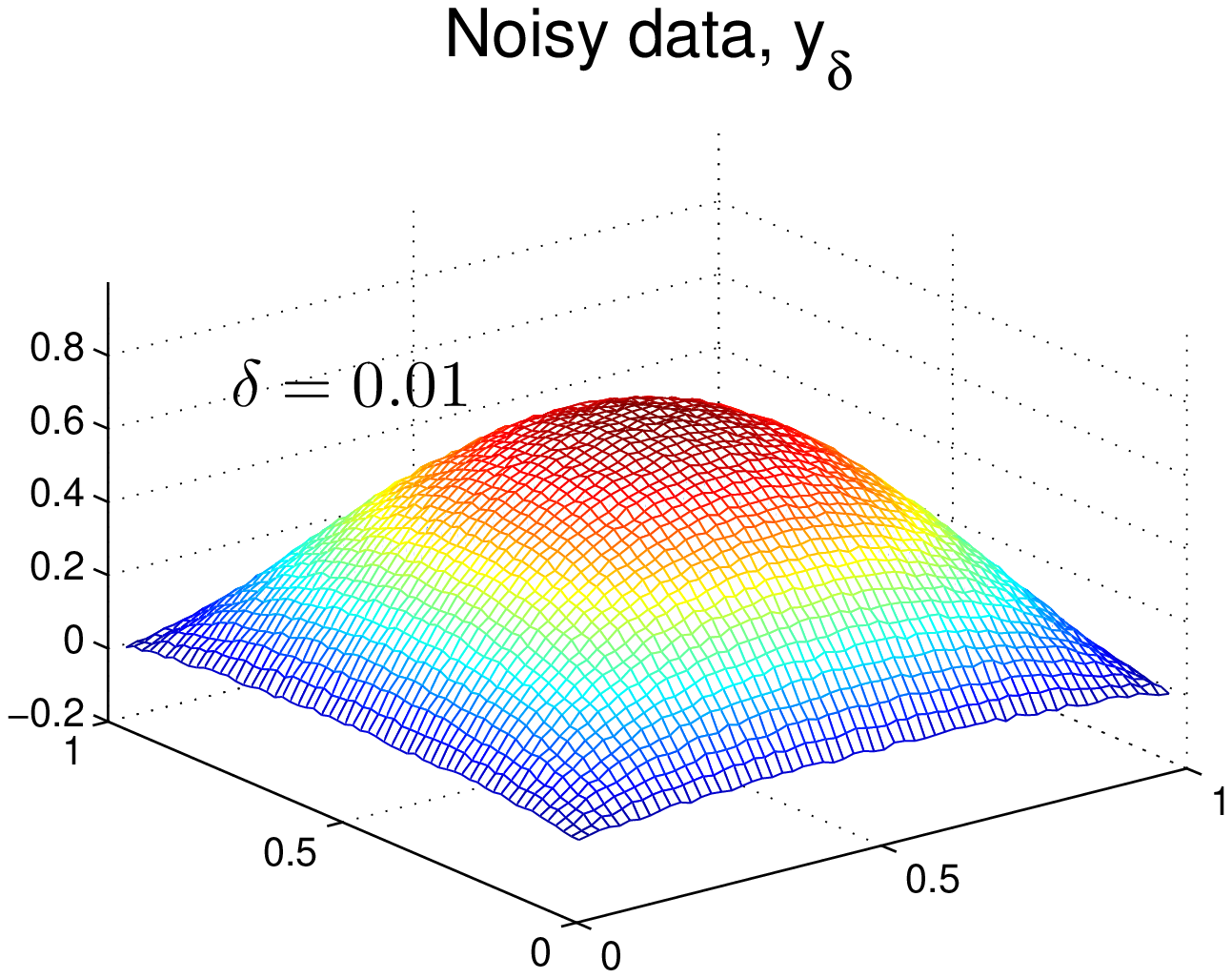}
   \includegraphics[height=4.5cm]{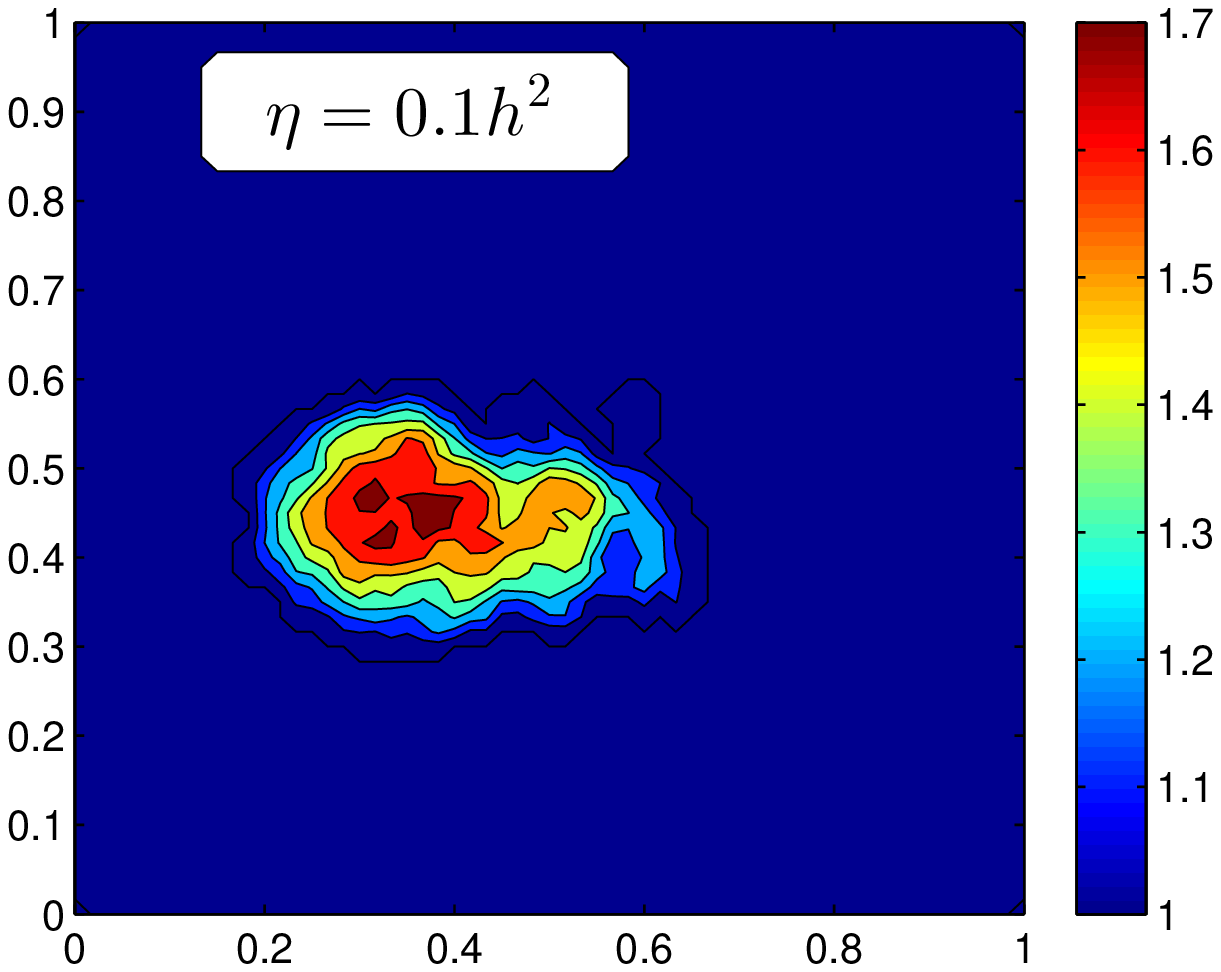}
   \includegraphics[height=4.5cm]{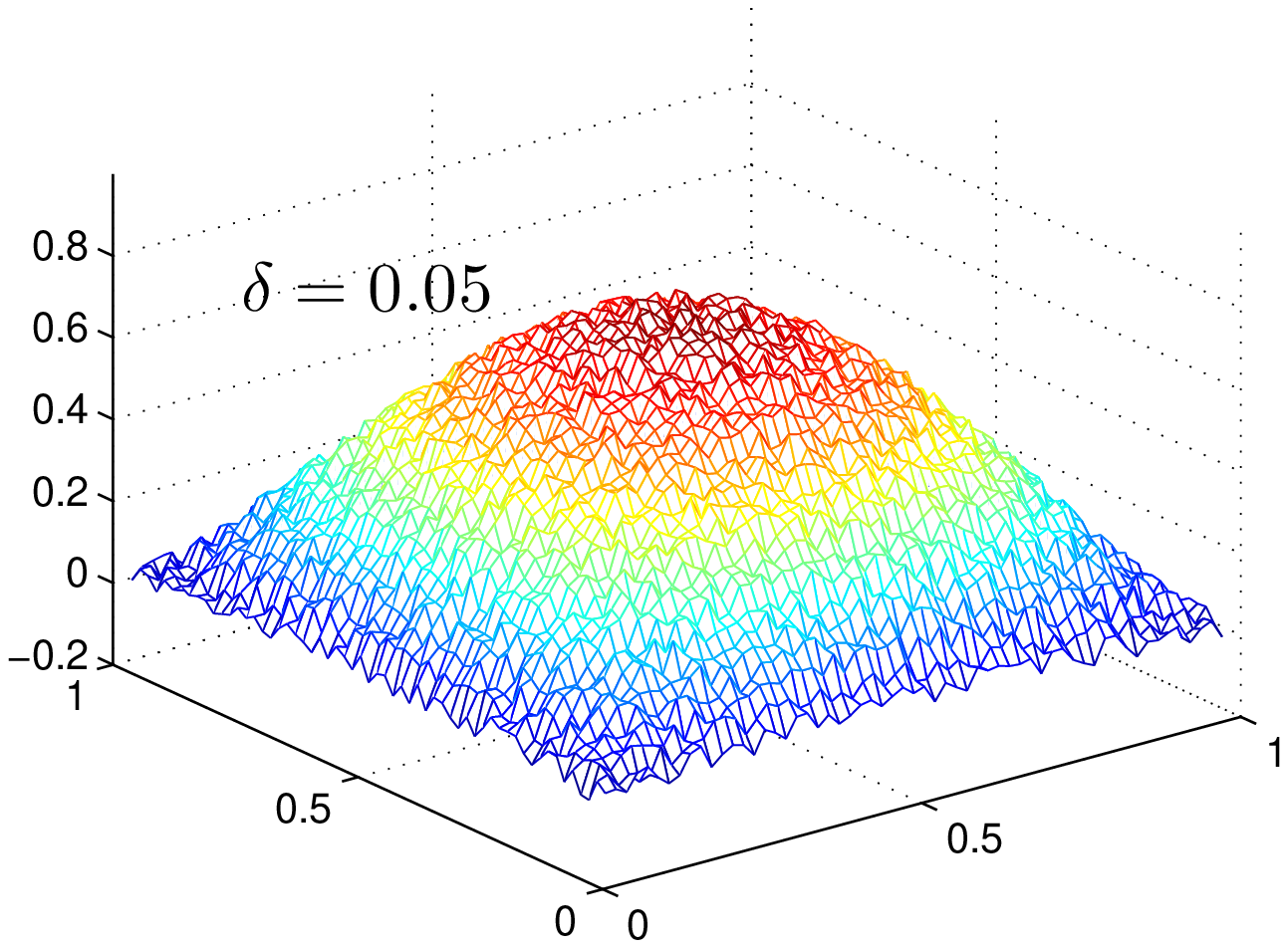}
  \end{center}
  \caption{Example 3 (Potential identification problem). Reconstructed potential (left) and the noisy data (right). Mesh $60 \times 60$. $h=\frac{1}{60}$. Parameters used in Algorithm 1 are;
$\alpha = 1 $, $P=\frac{I}{100} + \eta $, $\beta = 1$ and $\epsilon = h^2$. The regularization parameter $\eta$ was selected manually according to the noise level $\delta$. }
  \label{fig:ip}
\end{figure}
\section{Application to nonsmooth Tikhonov regularization}
The implicit fixed point iteration proposed in Section 2 is also applicable to the optimization problem involving $\phi(s) := \vert s \vert$ (or the sum $\sum_i \phi(x_i)$) in the objective function, for instance,
\[
\min_{u\in \R^{n^2}} J(u) := \frac{1}{2}\Vert u - vec(f)\Vert^2_{\R^{n^2}} +
\eta_1 \sum_{i=1}^{n^2} \left( \phi([D_x u]_{i})
+ \phi([D_y u]_{i}) \right) + \eta_2 \sum_{i=1}^{n^2} \phi( [H u]_{i}),
\]
where $f \in \R^{n\times n}$ is a given noisy image, $vec(f) =[f_{1,1},f_{2,1},\ldots,f_{n,n}]\in \R^{n^2}$,  $D_x$, $D_y$ represent finite differences in $x$ and $y$ direction and $H$ denotes a discrete Laplacian.  The problem is obtained by discretizing the multi-parameter nonsmooth Tikhonov regularization for a denoising problem
\begin{equation*}
\min_{u}\;  \frac{1}{2}\|  u - f \|^2_{L^2} + \eta_1 \| u\|_{TV}  + \eta_2 \| \Delta u \|_{L^1}.
\end{equation*}
Here $\eta_1,\eta_2$ are regularization parameters which must be appropriately selected in order to obtain a desired reconstructed image \cite{Ito+JinETAL-MultTikhregu:11,Ito+JinETAL-reguparanonsTikh:11}.
We define $\phi_\epsilon$, the regularization of $\phi$, by
$$
\phi_{\epsilon}(s)=\left\{\begin{array}{ll} s & s \ge \epsilon \\ \\
\ds \frac{s^2}{2\epsilon}+\frac{\epsilon}{2}
& s\in [0,\epsilon] \\ \\
\ds \frac{s^2}{2\epsilon}+\frac{\epsilon}{2}
& s \in [-\epsilon,0] \\ \\
-s & s \le-\epsilon.
\end{array} \right.
$$
The derivative is written as $\displaystyle \phi^\prime_\epsilon(s) = \frac{s}{\max(\epsilon,\vert s \vert )}$, and one follows the similar argument in Section 2 to arrive at the successive iteration algorithm:
\begin{align*}
&  \left( \alpha I   +\eta_1 ( D_x^t \chi_\epsilon(D_x u^k) D_x +  D_y^t \chi_\epsilon(D_yu^k) D_y  ) +
\eta_2 H^t \chi_\epsilon(H u^k) H\right) d^k = -\nabla J_\epsilon(u^k). \\
&\mbox{Update: }\quad   u^{k+1} = u^k + d^k.
\end{align*}
Here the diagonal matrix $\chi_\epsilon(v)\in \R^{n^2\times n^2}$ for $v \in \R^{n^2} $ is defined by
\begin{equation*}
[\chi_\epsilon(v)]_{j,j}=
\frac{1}{\max(\epsilon,\vert v_j\vert )}~,
\end{equation*}
and $J_\epsilon$ is a regularization of $J$;
\[
J_\epsilon(u) := \frac{1}{2}\Vert u - vec(f)\Vert^2_{\R^{n^2}} +
\eta_1 \sum_{i=1}^{n^2} \left( \phi_\epsilon([D_x u]_{i})
+ \phi_\epsilon([D_y u]_{i}) \right) + \eta_2 \sum_{i=1}^{n^2} \phi_\epsilon( [H u]_{i}).
\]
\noindent
Another example that the proposed method can handle includes the denosing problem by Total variation:
\begin{equation*}
  \frac{1}{2}\Vert u - f\Vert_{L^2}^2 + \alpha \int_{\Omega} \sqrt{u^2_x + u_y^2} dxdy
\end{equation*}
Let $\phi(s)$ be a function defined for $s\ge 0$ by
\begin{equation*}
  \phi_\epsilon(s) = \left\{\begin{array}{cc}
 \ds \frac{s^2}{2\epsilon}+\frac{\epsilon}{2} &  0\le s\le \epsilon \\[10pt]
  s &  \epsilon \le s
  \end{array}\right.\quad
  \phi^\prime_\epsilon(s) = \left\{\begin{array}{cc}
 \ds \frac{s}{ \epsilon} &  0\le s\le \epsilon \\[10pt]
  1 &  \epsilon \le s
  \end{array}\right.
\end{equation*}
The regularized objective functional takes the form
\begin{equation*}
J_\epsilon(u) =  \frac{1}{2}\Vert u - f\Vert_{L^2}^2 + \alpha \int_{\Omega} \phi_\epsilon(\sqrt{u^2_x + u_y^2}) dxdy
\end{equation*}
Let $\psi_\epsilon(u)$ be a discretization of the second term
\begin{equation*}
 \psi_\epsilon(u)= \sum_i  \phi_\epsilon (\sqrt{[D_xu]^2_i + [D_yu]_i^2}) \Delta x \Delta y=
 \sum_i  \phi_\epsilon (r_i) \Delta x \Delta y
\end{equation*}
where $r_i=\sqrt{[D_xu]^2_i + [D_yu]_i^2}$. The derivative of $\psi_\epsilon(u)$
\begin{align*}
 \frac{\partial \psi_\epsilon(u)}{\partial  u_k }& =    \sum_i  \phi_\epsilon^\prime(r_i)
\frac{[D_xu]_i D_x(i,k) + [D_yu]_i D_y(i,k)}{r_i} \\
&= \sum_i \frac{r_i}{\max(\epsilon, r_i)}\frac{1}{r_i}\left([D_xu]_i D_x(i,k) + [D_yu]_i D_y(i,k)  \right)
\end{align*}
Thus we have
\begin{equation*}
\psi^\prime_\epsilon(u)  =  D_x^t \chi_\epsilon(u) D_x u +  D_y^t \chi_\epsilon(u) D_y u
\end{equation*}
Here the diagonal matrix $\chi_\epsilon(u)$ is defined by
\begin{equation*}
  [\chi_\epsilon(u)]_{i,i}= \frac{1}{\max(\epsilon,r_i)}.
\end{equation*}
From the observation we arrive at the successive iteration algorithm for solving the nonlinear equation $J^\prime_\epsilon(u)=0$:
\begin{equation*}
  (P + D_x^t \chi_\epsilon(u^k) D_x + D_y^t \chi_\epsilon(u^k) D_y)d^k = -J_\epsilon(u^k),\quad
  x^{k+1} = x^k + d^k.
\end{equation*}
The details of the method and the numerical tests will be reported elsewhere.

%\section*{Acknowledgement}
%This research was supported in part by the Air Force Office of Scientific Research under grant number
%FA9550-09-1-0226. The second author is supported by the Aihara Innovative Mathematical
%Modelling Project, the Japan Society for the Promotion of Science
%(JSPS) through the "Funding Program for World-Leading Innovative R$\&$D
%on Science and Technology (FIRST Program)," initiated by the Council
%for Science and Technology Policy (CSTP).

\def\cprime{$'$} \def\cprime{$'$}

\end{document}